\newtheorem{lemma}{Lemma}
\newtheorem{proposition}{Proposition}
\newtheorem{theorem}{Theorem}
\newtheorem{corollary}{Corollary}
\title[Primes and divisor functions in arithmetic progressions]{Lower bounds for the variance of sequences in arithmetic progressions: primes and divisor functions}
\author{Adam J Harper}
\address{Jesus College \\ Cambridge \\ CB5 8BL \\ England} 
\email{\tt A.J.Harper@dpmms.cam.ac.uk}
\author{Kannan Soundararajan}
\address{Department of Mathematics\\ Stanford University\\ Stanford, CA 94305\\ USA} 
\email{\tt ksound@stanford.edu}
 \thanks{Adam Harper is supported by a research fellowship at Jesus College, Cambridge. Kannan Soundararajan was partially supported by NSF grant DMS 1500237, and a Simons Investigator grant from the Simons Foundation. The research for this paper was started when the first named author visited the second named author in April--May 2015, and he would like to thank Stanford University for their hospitality during this visit.}
\date{\today }
\begin{document}

\maketitle

\begin{abstract}
We develop a general method for lower bounding the variance of sequences in arithmetic progressions mod $q$, summed over all $q \leq Q$, building on previous work of Liu, Perelli, Hooley, and others. The proofs lower bound the variance by the minor arc contribution in the circle method, which we lower bound by comparing with suitable auxiliary exponential sums that are easier to understand.

As an application, we prove a lower bound of $(1-\epsilon) QN\log(Q^2/N)$ for the variance of the von Mangoldt function $(\Lambda(n))_{n=1}^{N}$, on the range $\sqrt{N} (\log N)^C \leq Q \leq N$. Previously such a result was only available assuming the Riemann Hypothesis.  
We also prove a lower bound $\gg_{k,\delta} Q N (\log N)^{k^2 - 1}$ for the variance of the divisor functions $d_k(n)$, valid on the range $N^{1/2+\delta} \leq Q \leq N$, 
for any natural number $k \geq 2$.
\end{abstract}

\section{Introduction} 

\noindent Suppose we are given a sequence ${\mathcal A} = (a_n)_{n=1}^{N}$ which we expect to be evenly distributed in 
arithmetic progressions:  precisely, we expect that for an arithmetic progression $a\pmod q$ with $(a,q)= h$, we have 
$$ 
\sum_{\substack{n\le N \\ n\equiv a \pmod q}} a_n \approx \frac{1}{\phi(q/h)} \sum_{\substack{n\le N \\ (n,q)=h} } a_n. 
$$ 
For example the sequences $a_n = \Lambda(n)$ and $a_n= d_k(n)$ (the $k$-divisor function, for any natural number $k$) 
are expected to satisfy the above property in wide ranges of $q$ relative to $N$.  For such a sequence $a_n$, we study here the variance 
  \begin{equation} 
  \label{1.1} 
  V(q;{\mathcal A}) := \sum_{h|q } \sum_{\substack{a\pmod q\\ (a,q)=h}} \Big| \sum_{n\equiv a \pmod q} a_n - \frac{1}{\phi(q/h)} \sum_{(n,q)=h} a_n \Big|^2,  
  \end{equation} 
  and more specifically the quantity  
 \begin{equation}
 \label{1.2}  
V({\mathcal A},Q) :=  \sum_{q\le Q} V(q;{\mathcal A}). 
 \end{equation}  
 In the range $N^{\frac 12} \leq Q \leq N$, we shall describe a general method to obtain a lower bound for the variance $V({\mathcal A},Q)$, and 
highlight the consequences for primes and divisor functions.

  \begin{theorem} 
  \label{primes}  Let $\epsilon >0$ be given, and let $N$ be large enough in terms of $\epsilon$, and let $Q$ be in the range $\sqrt{N}\le Q\le N$.  There exists an absolute constant $C$ such that  
  $$ 
  \sum_{q\le Q} \sum_{\substack{ a\pmod q \\ (a,q)=1}} \Big( \psi(N;q,a) - \frac{\psi_q(N)}{\phi(q)} \Big)^2 
  \ge (1-\epsilon) QN \Big(  \log \frac{Q^2}{N} - C \log \log N \Big) ,
  $$ 
  where
  $$ 
  \psi(N;q,a) := \sum_{\substack{n\le N \\ n\equiv a\pmod q} } \Lambda(n), \qquad \text{and} \qquad 
  \psi_q(N) := \sum_{\substack{n\le N\\ (n,q)=1}} \Lambda(n) .
  $$ 
  \end{theorem}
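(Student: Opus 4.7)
My plan follows the three-stage strategy outlined in the abstract: express the variance as a sum of exponential sums, relate that sum to a minor-arc integral via Farey dissection, and lower bound the minor-arc integral by Cauchy--Schwarz against a suitable auxiliary exponential sum. Setting $S_\Lambda(\alpha) := \sum_{n \le N} \Lambda(n) e(n\alpha)$, the first step exploits the standard character-orthogonality identity
$$\sum_{\substack{a \pmod q \\ (a,q)=1}} \Big| \psi(N;q,a) - \frac{\psi_q(N)}{\phi(q)} \Big|^2 = \frac{1}{\phi(q)} \sum_{\chi \neq \chi_0 \pmod q} |\psi(N,\chi)|^2$$
together with Gauss-sum duality to rewrite the right-hand side (modulo errors from imprimitive characters) as a weighted sum of $|S_\Lambda(a/q^*)|^2$ over primitive fractions $a/q^*$ with $q^* \mid q$. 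Summing over $q \le Q$ and reorganising by conductor then packages $V(\Lambda,Q)$ as an explicit weighted sum of $|S_\Lambda|^2$ at Farey fractions of level~$Q$.

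Dirichlet's approximation theorem next places every $\alpha \in [0,1]$ in a unique Farey arc of the form $|\alpha - a/q| \le 1/(qQ)$ with $q \le Q$. Inserting a Fej\'er-type smoothing kernel to interpolate between the values $|S_\Lambda(a/q)|^2$ and a $Q$-weighted integral of $|S_\Lambda|^2$, and exploiting the slow variation of $S_\Lambda$ on short arcs, should yield a bound of the form
$$V(\Lambda, Q) \ge (1-\epsilon) \cdot \mathcal M(\Lambda; Q) - (\text{major-arc main terms}),$$
where $\mathcal M(\Lambda; Q)$ is a minor-arc $L^2$ functional of $S_\Lambda$ whose target size is $\sim QN\log(Q^2/N)$. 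The theorem thus reduces to lower bounding $\mathcal M(\Lambda;Q)$.

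For this, I would introduce an auxiliary sequence $(b_n)$ with exponential sum $T(\alpha) := \sum_n b_n e(n\alpha)$, chosen as a Vaughan- or Heath--Brown-type truncated decomposition of $\Lambda$ supported on scales $\le N/Q$. Cauchy--Schwarz then gives
$$\Big(\sum_n \Lambda(n) b_n\Big)^2 = \Big|\int_0^1 S_\Lambda(\alpha) \overline{T(\alpha)} \, d\alpha \Big|^2 \le \mathcal M(\Lambda;Q) \cdot \mathcal M^\ast(b; Q) + (\text{major-arc error}),$$
so that $\mathcal M(\Lambda; Q) \ge (\sum_n \Lambda(n) b_n)^2 / \mathcal M^\ast(b;Q)$, which delivers the target $\sim QN\log(Q^2/N)$ provided $(b_n)$ is tuned so that $\sum_n \Lambda(n) b_n$ and $\mathcal M^\ast(b;Q)$ take the appropriate values.

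The principal obstacle lies in the construction of $(b_n)$: it must simultaneously have controllable $L^2$ mass, correlate with $\Lambda$ to produce the target main term of size $\sim N\log(Q^2/N)$, and admit a major-arc expansion that can be matched cleanly so that negligible mass is lost on either side of Cauchy--Schwarz. The hypothesis $Q \ge \sqrt N (\log N)^C$ enters precisely because the scale $N/Q \le \sqrt N/(\log N)^C$ is just short enough that $\sum_n \Lambda(n) b_n$ can be evaluated unconditionally via classical zero-free-region inputs (\`a la Vinogradov--Korobov or Selberg--Delange), thereby replacing the Riemann Hypothesis used in previous conditional treatments of this problem.
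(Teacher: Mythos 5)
Your overall three-stage strategy matches what is described in the paper's abstract and introduction, which is hardly surprising. The character-orthogonality opening in Step~1 is a genuinely different route from the paper's: the paper works entirely with Ramanujan sums via the identity in its Proposition~5 (\(H(q;\mathcal A) = \sum_{d\mid q} dV(d;\mathcal A)\mu(q/d)\)), explicitly \emph{avoiding} Dirichlet characters so that the same machinery applies to \(d_k(n)\) as well as to \(\Lambda(n)\). For Theorem~1 alone, your character route is viable --- the paper itself credits Liu and Perelli with that approach --- though your passage from Farey points to a minor-arc integral is left extremely vague (``Fej\'er-type smoothing kernel'', ``slow variation'') where the paper uses a precise Diophantine counting lemma (its Lemma~\ref{diop}) together with the large sieve.

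However, there is a fatal error in Step~3: you take the auxiliary sequence \((b_n)\) to be ``supported on scales \(\le N/Q\),'' whereas the scale must be taken close to \(Q\), not \(N/Q\). Concretely, the paper uses \(\tilde a_n = \sum_{r\mid n,\, r\le R}\mu(r)\log(R/r)\,\Phi(n/N)\) with \(R = Q/(\log N)^{20}\), \(K = (\log N)^2\), and \(Q_0 = N(\log N)^{10}/Q\). The minor arcs exclude denominators \(q \le KQ_0 \asymp N/Q\), so the correlation \(\int_{\mathfrak m} S_\Lambda\overline{T}\) and the auxiliary energy \(\int_{\mathfrak m}|T|^2\) both come out to size \(\approx N\log\bigl(R/(KQ_0)\bigr)\); the ratio appearing in Cauchy--Schwarz is then \(N\log\bigl(R/(KQ_0)\bigr)\), which equals \(N\log(Q^2/N) + O(N\log\log N)\) precisely because \(R\asymp Q\) while \(KQ_0\asymp N/Q\). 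If you instead restrict the auxiliary sequence to scales \(\le N/Q\), then \(R\asymp KQ_0\) and \(\log(R/(KQ_0))\) is bounded: the minor-arc contribution of \(T\) vanishes up to constants, the Cauchy--Schwarz quotient collapses, and you get no logarithmic gain at all. Your heuristic explanation of the threshold \(Q \ge \sqrt N(\log N)^C\) (that short scales \(\le N/Q\) make \(\sum_n\Lambda(n)b_n\) evaluable) is likewise off: the paper evaluates \(\sum_n \Lambda(n)\tilde\Lambda(n)\) at the much larger scale \(R\asymp Q\) via Graham's theorem and the classical zero-free region, and the constraint on \(Q\) arises instead from needing \(KQ_0 \le R \le Q/(2K)\), i.e.\ \(N(\log N)^{O(1)}/Q \ll Q\), which forces \(Q \gg \sqrt N(\log N)^{O(1)}\).
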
 
  
  Theorem \ref{primes} continues a long line of investigations connected with the Barban--Davenport--Halberstam theorem, which established an upper bound $O(QN\log N)$ for the variance in Theorem  
  \ref{primes} in the range $N\ge Q \ge N (\log N)^{-A}$ for any fixed $A$.  The upper bound was refined by Montgomery~\cite{Montgomery} to the asymptotic $\sim QN\log Q$ in the same range of $Q$, and, on the assumption of GRH, Hooley~\cite{Hooley2} established this asymptotic in the range $Q \ge N^{\frac 12+\epsilon}$.   In \cite{FrGo1, FrGo2}, Friedlander and Goldston established bounds on the variance for individual $q$,
  $$
  \sum_{\substack{ a\pmod q \\  (a,q) =1}} \Big( \psi(N;q,a) - \frac{\psi_q(N)}{\phi(q) }\Big)^2,  
  $$ 
in a limited range for $q$ unconditionally, and in a wider range $N^{2/3+ \epsilon} \le q\le N$ conditional on the Generalized Riemann Hypothesis.  
Unconditional (weaker) lower bounds for the variance in Theorem \ref{primes} in restricted ranges were obtained by Liu \cite{Liu0, Liu} and Perelli \cite{Perelli}, with further refinements by Hooley \cite{Hooley13, Hooley15, Hooley16}.  In particular, the work of Perelli and the later papers~\cite{Hooley15, Hooley16} of Hooley allow ranges of $Q$ of the form $x^{1-c} \leq Q \leq x$, for some small fixed $c > 0$. Building on ideas of Friedlander and Goldston~\cite{FrGo1}, Hooley~\cite{Hooley13} also gave another conditional proof of the lower bound in Theorem \ref{primes}, now requiring only that the Riemann zeta function have no zeros with real part $> 3/4$. We should also mention that some of the previous literature concentrates, not on the true variance in Theorem \ref{primes}, but on the larger quantity
$$ 
\sum_{q\le Q} \sum_{\substack{ a\pmod q \\ (a,q)=1}} \Big( \psi(N;q,a) - \frac{N}{\phi(q)} \Big)^2 .
$$ 
Hooley~\cite{Hooley15} gave an unconditional lower bound for this quantity that is more or less the same as in Theorem \ref{primes}, essentially by exploiting the fact that if the Riemann zeta function did have zeros with large real part, these would give an additional positive contribution because of the difference between $N/\phi(q)$ and the true average $\psi_q(N)/\phi(q)$.
  
  \begin{theorem} 
  \label{kdiv}    Let $k \geq 2$ be a natural number.  Define 
  $$ 
  V_k(q)=V_k(q;N) := \sum_{h|q} \sum_{\substack{a\pmod q\\ (a,q)=h}} \Big ( \sum_{\substack{ n\le N\\ n\equiv a \pmod q}} d_k(n) - 
  \frac{1}{\phi(q/h)} \sum_{\substack{n\le N\\ (n,q)=h}} d_k(n) \Big)^2. 
  $$ 
  Let $\delta >0$ be a real number.  If $N$ is large enough in terms of $\delta$, then uniformly in the range $N^{\frac 12+\delta} \le Q \le N$ we have 
  $$ 
  \sum_{q\le Q} V_k( q) \gg_{k,\delta} QN (\log N)^{k^2-1}. 
  $$ 
  \end{theorem}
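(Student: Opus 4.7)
The plan is to apply the paper's general framework with $a_n = d_k(n)$: first express the variance in terms of exponential sums via Parseval, then lower bound it by the minor-arc contribution, and finally compare this to a carefully chosen auxiliary exponential sum by Cauchy--Schwarz.

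After Parseval (decomposing the characters mod $q$ into primitive characters mod divisors $r \mid q$, $r > 1$) and summation over $q \le Q$, one arrives at a lower bound
\[
\sum_{q \le Q} V_k(q) \;\gg\; W(\mathcal{A}) \;:=\; \sum_{r \le Q} \omega_r \sum_{\substack{(b,r)=1\\1 \le b < r}} |S(b/r)|^2,
\]
where $S(\alpha) := \sum_{n \le N} d_k(n) e(n\alpha)$ and the positive weights $\omega_r \asymp Q/r^2$ arise from $\sum_{q \le Q,\, r \mid q} \phi(q/r)/q$. For any auxiliary $\mathcal{B} = (b_n)$ with exponential sum $T(\alpha) := \sum_n b_n e(n\alpha)$, Cauchy--Schwarz on $W(\mathcal{A})$ against $T(b/r)$ yields
\[
W(\mathcal{A}) \;\ge\; \frac{|W(\mathcal{A},\mathcal{B})|^2}{W(\mathcal{B})}, \qquad W(\mathcal{A},\mathcal{B}) := \sum_{r \le Q} \omega_r \sum_{(b,r)=1} S(b/r)\overline{T(b/r)},
\]
with $W(\mathcal{B}) \le V(\mathcal{B},Q)$. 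Expanding $W(\mathcal{A},\mathcal{B})$ by Ramanujan sums, the diagonal $n = m$ contributes $\asymp Q \sum_n d_k(n) b_n$, so the scheme succeeds once we choose $\mathcal{B}$ correlating strongly with $d_k$ while having a controlled variance in progressions.

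I would take $\mathcal{B}$ to be a truncated $k$-fold convolution
\[
b_n \;=\; \sum_{\substack{n = m_1 \cdots m_k\\ M_i \le m_i < 2 M_i}} 1,
\]
with dyadic parameters satisfying $\prod M_i \asymp N$ and $N^{\eta} \le M_i \le N^{1/2 - \eta}$ for small $\eta = \eta(\delta) > 0$. Since $b_n \le d_k(n)$, a Selberg-type count of solutions to $m_1 \cdots m_k = m'_1 \cdots m'_k$ with factors of the prescribed sizes gives $\sum_n d_k(n) b_n \ge \sum_n b_n^2 \gg_k N(\log N)^{k^2-1}$, so the diagonal of $W(\mathcal{A},\mathcal{B})$ has size $\gg_k QN(\log N)^{k^2-1}$. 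For the denominator, the Type II structure of $b_n$ permits an upper bound $V(\mathcal{B}, Q) \ll_{k,\delta} QN(\log N)^{k^2-1}$ in the range $N^{1/2+\delta} \le Q \le N$: peeling off one $m_i$ as a smooth linear variable reduces $V(\mathcal{B},Q)$ to a mean-value estimate below the square-root barrier handled by Gallagher's method or the large sieve.

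The principal obstacle is controlling the off-diagonal $(n \ne m)$ contribution to $W(\mathcal{A},\mathcal{B})$, which after Ramanujan expansion takes the shape $\sum_{n \ne m} d_k(n) b_m \sum_{r \le Q} (\omega_r/r)\, c_r(n-m)$ and must be shown to be $o(QN(\log N)^{k^2-1})$. This amounts to a Bombieri--Vinogradov-type equidistribution statement for the convolution $d_k \ast b$ along arithmetic progressions, and is precisely the reason behind the restriction $Q \ge N^{1/2+\delta}$: any mean-value estimate for $d_k$ in progressions that reaches the square-root barrier is sufficient. A secondary technical issue is keeping the implicit constants through Cauchy--Schwarz well-tracked, so that the final lower bound has the stated dependence on $k$ and $\delta$.
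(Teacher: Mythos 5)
Your overall scheme (pass to sums of $|S(b/r)|^2$ over Farey fractions, then Cauchy--Schwarz against an auxiliary exponential sum) is in the same spirit as the paper's, but two of your steps would fail as stated. First, the reduction of $\sum_{q\le Q}V_k(q)$ to $W(\mathcal{A})$ via ``decomposing the characters mod $q$ into primitive characters'' is precisely the step the paper identifies as problematic for $d_k$: passing from $|\sum_n d_k(n)\chi(n)|^2$ to $|S(b/r)|^2$ goes through Gauss sums and only works cleanly when the sequence is supported on $n$ with $(n,q)=1$, as for $\Lambda$. The paper circumvents this entirely with Proposition \ref{Propos1}, which relates $V(q;\mathcal{A})$ to the quantities $H(d;\mathcal{A})$ through a Ramanujan-sum identity rather than characters, and that is where the correction term $\sum_{d>Q_0}\phi(d)^{-1}|\sum_n a_n c_d(n)|^2$ comes from in Proposition \ref{Prop1.3}. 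Your derivation does not produce that correction term and needs it. Second, your diagonal estimates are off: for a \emph{single} dyadic box one has $\sum_n b_n^2\asymp N(\log N)^{(k-1)^2}$ and $\sum_n d_k(n)b_n\asymp N(\log N)^{k(k-1)}$, not $N(\log N)^{k^2-1}$. (For $k=2$ the constraints $M_1M_2\asymp N$ and $M_i\le N^{1/2-\eta}$ are incompatible, so there is not even a valid choice of parameters.) It happens that $2\cdot k(k-1)-(k-1)^2=k^2-1$, so if the diagonal dominated $W(\mathcal{A},\mathcal{B})$ and $W(\mathcal{B})\asymp Q\sum b_n^2$, the Cauchy--Schwarz ratio would still give the right exponent---but you must then carry these corrected powers of $\log$ through the argument, and you cannot simply cite $\sum b_n^2\gg N(\log N)^{k^2-1}$.

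The more serious omission is the part you compress into ``this amounts to a Bombieri--Vinogradov-type equidistribution statement.'' After subtracting the major-arc (or, in your language, off-diagonal) contribution from the cross term, there is no a priori reason the remainder has the right sign or order of magnitude; it could cancel to zero. Handling this is the heart of the paper's proof of Theorem \ref{kdiv}: in Section 8.1 the cross term is evaluated as a triple residue reducing to the explicit polynomial \eqref{6.9} in $\log R/\log N$, and positivity is extracted by a Chebyshev-polynomial lower bound after varying $R$ over $[Q_0N^\epsilon,QN^{-\epsilon}]$; in the alternative Section 8.2 the problem is sidestepped by Proposition \ref{newprop}, taking absolute values inside the minor-arc integral and lower-bounding $|\sum_n d_k(n)c_q(n)\Phi(n/N)|$ directly via Lemma \ref{ramdkeval}. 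Your auxiliary sequence would also make the major-arc evaluation harder than it needs to be: the paper's $\tilde d_k(n)=\sum_{r\mid n,\,r\le R}d_{k-1}(r)\Phi(n/N)$ is a divisor sum of short length $R<Q$, so on a major arc around $a/q$ with $q\le KQ_0\le R$ one only has to understand integers in short intervals mod $q$ (Lemma \ref{major}), whereas your $b_n$ has all $k$ factors of size up to $N^{1/2-\eta}$ with no short component to exploit.
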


  The equidistribution of divisor functions in arithmetic progressions has been extensively studied; for example, in the case of $d_3(n)$ 
  there has been important progress on obtaining equidistribution in an individual arithmetic progression for large moduli, see \cite{FI, HB, FKM}, and for 
  the standard divisor function $d_2(n)$ such results were obtained in unpublished work of Hooley and Selberg, using Weil's bound for Kloosterman sums.   The variance of the divisor function $d_2$ in 
  arithmetic progressions $\pmod q$ has been studied by Motohashi \cite{Mot}, Blomer \cite{Blo}, and Lau and Zhao \cite{LZ}, and in \cite{LZ} an asymptotic for this variance is 
  obtained for individual $q$ with $N^{\frac 12+\epsilon} <q \le N^{1-\epsilon}$.    For larger $k$, the finer study of the variance of the $k$-divisor function in short intervals and arithmetic progressions has recently been initiated by Keating, Rodgers, Roditty-Gershon and Rudnick~\cite{KRRR}.   In particular, the work of \cite{KRRR} suggests the conjecture that 
  $$ 
  \sum_{q\le Q} V_k(q) \sim a_k QN(\log Q)^{k^2 -1} \gamma_k\Big(\frac{\log N}{\log Q}\Big),
  $$ 
  for a suitable positive constant $a_k$, and a complicated ``piecewise-polynomial" function $\gamma_k(x)$:  for each interval $x \in [\ell,\ell+1)$ (with $\ell=0$, $\ldots$, $k-1$) 
  the function $\gamma_k(x)$ is given by a polynomial in $x$ of degree $k^2-1$.   We remark that a closely related piecewise polynomial arose in the work of 
  Conrey and Gonek \cite{ConreyGonek} when they were formulating conjectures for the eighth moment of $\zeta(\frac 12+it)$.  
  Work in progress of Rodgers and the second author \cite{RodSound} establishes 
  a version of this conjecture when $k=3$ and $N^{\frac 12+\epsilon} \le Q \le N^{1-\epsilon}$, and for larger $k$ in a narrow range of values of $Q$ sufficiently close to $N$.  
  Theorem \ref{kdiv} adds to this literature by obtaining a lower bound of the right order of magnitude in the range $N^{\frac 12+\delta} <Q \leq N$; in view of the 
  results mentioned above, Theorem \ref{kdiv} is of interest for $k\ge 4$.  For other recent results related to the distribution of divisor functions (and other related functions like Hecke eigenvalues) in short intervals and  progressions,  see \cite{KowalskiRicotta, LesterYesha, FGKM, Lester}.

  We now outline the proofs of our theorems, starting with a general sequence ${\mathcal A}$ as in 
  \eqref{1.1} and \eqref{1.2}.     Define the associated exponential sum 
   \begin{equation} 
   \label{1.3} 
   {\mathcal A}(\alpha) := \sum_{n \leq N} a_n e(n\alpha) ,  
   \end{equation}
where as usual $e(\theta) := e^{2\pi i\theta}$. We also recall that the Ramanujan sum is given by 
   \begin{equation} 
   \label{1.4} 
   c_q(n) = \sum_{(a,q)=1} e(an/q) = \frac{\mu(q/(q,n)) \phi(q)}{\phi(q/(q,n))}. 
   \end{equation}  
We will first establish a general inequality connecting the variance $V({\mathcal A},Q)$ with the integral over ``minor arcs" of $|{\mathcal A}(\alpha)|^2$.  

\begin{proposition} \label{Prop1.3} Let $N$ be large, let $K\ge 5$ be a parameter, and let $Q_0$ and $K \sqrt{N\log N} \le Q \le N$ be such that 
\begin{equation} 
\label{1.5} 
\frac{N \log N}{Q} \le Q_0 \le \frac{Q}{K^2}. 
\end{equation} 
Let ${\frak M} = {\frak M}(Q_0,Q;K)$ denote the major arcs, consisting of those $\alpha \in {\Bbb R}/{\Bbb Z}$ having an approximation $|\alpha-a/q| \le K/(qQ)$ with $q\le K Q_0$ and $(a,q)=1$.  Let ${\frak m}$, the minor arcs, denote the complement of the major arcs in ${\Bbb R}/{\Bbb Z}$.  Then 
\begin{align*}
\sum_{Q_0 <q \le Q} V(q;{\mathcal A})& \ge Q\Big(1+ O\Big(\frac{\log K}{K}\Big)\Big) \int_{\frak m} |{\mathcal A}(\alpha)|^2 + O \Big( \frac{NK}{Q_0} \sum_{n\le N} |a_n|^2 \Big) \\
&-\sum_{q\le Q } \frac{1}{q} \sum_{\substack {d|q \\ d>Q_0}} \frac{1}{\phi(d)} 
\Big| \sum_{n} a_n c_d(n)\Big|^2. 
\end{align*} 
\end{proposition}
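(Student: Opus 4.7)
The plan is to apply Parseval on $\mathbb{Z}/q\mathbb{Z}$ to rewrite $V(q;\mathcal A)$ in terms of $\mathcal A(b/q)$, organize the Fourier frequencies by lowest-term denominator, lower-bound each block by completing the square, and finally compare the resulting Farey sum with the minor-arc integral. Concretely, set $f(a):=\sum_{n\equiv a\,(q)}a_n$ and $\bar f(h):=\phi(q/h)^{-1}\sum_{(n,q)=h}a_n$, so that $V(q;\mathcal A)=\sum_{a\pmod q}|g(a)|^2$ for $g(a):=f(a)-\bar f((a,q))$. Discrete Parseval gives $V(q;\mathcal A)=q^{-1}\sum_b|\hat g(b)|^2$ with $\hat g(b)=\overline{\mathcal A(b/q)}-\sum_{h\mid q}\bar f(h)c_{q/h}(b)$, using $\sum_{(a,q)=h}e(-ab/q)=c_{q/h}(b)$. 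Grouping the $b$-sum by $d:=q/(b,q)$ (so $b=(q/d)b'$ with $(b',d)=1$) and observing that $(q/h,(q/d)b')=(q/h,q/d)$ is independent of $b'$ (split into cases $p\mid b'$ vs $p\nmid b'$ prime-by-prime), one obtains
\[
V(q;\mathcal A)=\frac{1}{q}\sum_{d\mid q}\sum_{(b,d)=1}\bigl|\mathcal A(b/d)-M_{d,q}\bigr|^2
\]
for certain constants $M_{d,q}$ whose explicit form is not needed.

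For each fixed $d$, the inequality $\sum_{(b,d)=1}|\mathcal A(b/d)-M|^2\ge\sum_{(b,d)=1}|\mathcal A(b/d)|^2-\phi(d)^{-1}|\sum_{(b,d)=1}\mathcal A(b/d)|^2$ (valid for all $M$; the right side is the value at the minimizing choice), together with $\sum_{(b,d)=1}\mathcal A(b/d)=\sum_n a_n c_d(n)$, gives after restricting the $d$-sum to $d>Q_0$ (to postpone the small-$d$ Ramanujan terms),
\[
V(q;\mathcal A)\ge\frac{1}{q}\sum_{\substack{d\mid q\\ d>Q_0}}\Bigl[\sum_{(b,d)=1}|\mathcal A(b/d)|^2-\frac{1}{\phi(d)}\Bigl|\sum_n a_n c_d(n)\Bigr|^2\Bigr].
\]
Summing over $Q_0<q\le Q$ and swapping orders (using $\sum_{d\mid q,\,q\le Q}q^{-1}=d^{-1}H(Q/d)$ with $H(x):=\sum_{m\le x}1/m$), the negative piece reproduces exactly the subtracted term in the proposition, while the positive piece becomes
\[
\mathcal T:=\sum_{Q_0<d\le Q}\frac{H(Q/d)}{d}\sum_{(b,d)=1}|\mathcal A(b/d)|^2.
\]

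It remains to prove $\mathcal T\ge Q(1+O(\log K/K))\int_{\mathfrak m}|\mathcal A|^2+O(NKQ_0^{-1}\sum_n|a_n|^2)$; this is the main obstacle. The anchor is the identity $\sum_{d\le Q}\phi(d)H(Q/d)/d=Q$ (a rearrangement of $\sum_{d\mid n}\phi(d)=n$), which already says $\mathcal T$ has the correct total Farey weight $\approx Q$ when $|\mathcal A|^2$ is replaced by the constant $1$. For general $\mathcal A$ one exploits the hypothesis $Q_0Q\ge N\log N$: for each Farey fraction $b/d$ with $d>Q_0$, the Farey arc of order $Q$ around $b/d$ has length $\ll1/N$, on which the $L^2$-Bernstein inequality $\|\mathcal A'\|_2\le 2\pi N\|\mathcal A\|_2$ forces $|\mathcal A|^2$ to be essentially constant. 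Replacing each $|\mathcal A(b/d)|^2$ by its local average on the associated Farey arc, and then restricting to the Dirichlet minor-arc window $KQ_0<d\le Q/K$ (whose Farey arcs lie inside $\mathfrak m$), converts $\mathcal T$ into $Q\int_{\mathfrak m}|\mathcal A|^2$; the multiplicative slack $1+O(\log K/K)$ comes from the excluded truncation ranges $(Q_0,KQ_0]$ and $(Q/K,Q]$, and the additive error $O(NKQ_0^{-1}\sum|a_n|^2)$ comes from the Bernstein-variation contribution on each arc. The hardest task is executing this Farey dissection and variation estimate carefully enough to match both errors simultaneously.
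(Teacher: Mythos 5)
Your first half is correct and, if anything, cleaner than the paper's. The route you take — discrete Parseval on $\mathbb{Z}/q\mathbb{Z}$, grouping frequencies $b$ by the reduced denominator $d=q/(b,q)$, checking that the centering constant depends only on $d$ (not on $b'$), and then invoking the completing-the-square inequality $\sum_{(b,d)=1}|\mathcal A(b/d)-M|^2 \ge \sum_{(b,d)=1}|\mathcal A(b/d)|^2-\phi(d)^{-1}|\sum_n a_n c_d(n)|^2$ — arrives at exactly the paper's Corollary~\ref{Cor1} while sidestepping the Ramanujan-sum identity (Lemma~1) and the explicit formula $qV(q;\mathcal A)=\sum_{d\mid q}H(d;\mathcal A)$. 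Swapping orders then gives your $\mathcal T=\sum_{Q_0<d\le Q}\frac{H(Q/d)}{d}\sum_{(b,d)=1}|\mathcal A(b/d)|^2$, which matches what the paper arrives at after its Proposition~\ref{Propos1} and Corollary~\ref{Cor1}. That part is fine, and arguably a nicer way to see why the inequality holds.

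The gap is in your bridge from $\mathcal T$ to $Q\int_{\mathfrak m}|\mathcal A|^2$, and I think it is fatal as stated. You propose to replace $|\mathcal A(b/d)|^2$ by its average over the \emph{Farey arc} $I_{b/d}$ of order $Q$ and then restrict to $KQ_0<d\le Q/K$ so that the arcs lie in $\mathfrak m$. Two things go wrong. First, the weight density you assign to a point $\alpha$ in the arc $I_{b/d}$ is $\frac{H(Q/d)}{d\,|I_{b/d}|}$; since $|I_{b/d}|\in\bigl(\frac{1}{dQ},\frac{2}{dQ}\bigr)$, for $d$ near $Q$ (where $H(Q/d)\approx 1$) this density is only about $Q/2$, not $(1-o(1))Q$. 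You cannot recover the factor $2$ with Bernstein: it's a genuine mismatch between the weight $H(Q/d)/d$ and the arc length. Second, and more seriously, the truncation to $d\le Q/K$ discards almost everything: the total Farey measure of cells with $d\le Q/K$ is $O(1/K)$, and equivalently $\sum_{d\le Q/K}\phi(d)H(Q/d)/d$ is only $O(Q/K)$ out of a total of $Q$. Your ``excluded truncation ranges cost $O(\log K/K)$'' intuition is reversed — the range $d\in(Q/K,Q]$ carries $(1-O(1/K))$ of the total weight and covers almost all of $\mathfrak m$, so restricting to $d\le Q/K$ loses the main term. The averaged identity $\sum_{d\le Q}\phi(d)H(Q/d)/d=Q$ only tells you the total weight is right; it says nothing about whether the weight sits pointwise over $\mathfrak m$ with density $\ge(1-o(1))Q$.

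The missing ingredient is the paper's Lemma~\ref{diop}, which is the genuinely nontrivial step. Instead of Farey cells, one puts a \emph{uniform} interval of radius $K/(Q_0 Q)$ around each $a/q$ (with $Q_0<q\le Q$, $q/(a,q)>Q_0$, weight $1/q$), and shows that for \emph{every} $\alpha\in\mathfrak m$ the total weight of such fractions within the interval is at least $\frac{2K}{Q_0}(1-O(\log K/K))$. This is proved constructively: starting from a Dirichlet approximation $a_0/q_0$ to $\alpha$ with $KQ_0\le q_0\le Q/K$, one builds new fractions $a/q$ with $q=\ell q_0+b$ and suitably chosen $b$, and these denominators $q$ range all the way up to $\approx Q$. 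Once the pointwise lower bound on $f(\alpha)$ is in place, the comparison $\mathcal T\ge\frac{Q_0}{2K}(1-O(\log K/K))^{-1}\int_0^1 f(\alpha)|\mathcal A|^2$... wait, rather $\int_{\mathfrak m} f(\alpha)|\mathcal A(\alpha)|^2\,d\alpha\le\int_0^1 f|\mathcal A|^2$ unfolds into a sum over $(a,q)$ of short local integrals, and \emph{then} your Bernstein/large-sieve variation estimate does exactly the right job, producing the additive error $O(NKQ_0^{-1}\sum|a_n|^2)$. So the variation estimate is the right tool, but it has to be wedded to Lemma~\ref{diop}'s uniform-interval Farey covering, not to the Farey-cell tiling.
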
 

Proposition \ref{Prop1.3}, and especially Proposition \ref{Propos1} below which forms the main step in its proof, generalises and simplifies the argument in section 4 of Hooley~\cite{Hooley15}. The idea of a connection between the variance in arithmetic progressions and the minor arc contribution in the circle method is widespread, and as Hooley notes both Liu~\cite{Liu0, Liu} and Perelli~\cite{Perelli} used it as well. However, the latter arguments relied on the connection between character sums and exponential sums (similarly as in the usual deductions of the multiplicative large sieve inequality), which can only be made to work (straightforwardly) when $a_n = \Lambda(n)$ or for other sequences without small prime factors. In contrast, the proof of Proposition \ref{Prop1.3} avoids Dirichlet characters and develops Hooley's approach, connecting the variance of $(a_n)_{n=1}^{N}$ in arithmetic progressions with the variance of the exponential sums $\mathcal{A}(a/q)$. By positivity of the variance one can discard the major arc contribution to the latter (which we would anyway probably expect to be small), leaving only a minor arc contribution and some terms involving Ramanujan sums $c_d(n)$ with $d$ fairly large.

For sequences such as the primes and divisor functions, the contribution of the sums involving the Ramanujan sum in Proposition \ref{Prop1.3} may be shown to be negligible, and it then remains to bound from below the minor arc contribution.   To do this, our idea is to introduce another sequence $({\tilde a}_n)_{n=1}^{N}$ that suitably approximates $a_n$, and such that 
  the associated exponential sum 
 \begin{equation} 
 \label{1.6} 
  {\tilde {\mathcal A}}(\alpha) = \sum_{n \leq N} {\tilde a}_n e(n\alpha) 
  \end{equation}  
  is more easily understood.    Then by Cauchy--Schwarz we have 
  \begin{equation} 
  \label{1.7} 
  \int_{\frak m} |{\mathcal A}(\alpha)|^2 d\alpha \ge \Big| \int_{\frak m} {\mathcal A}(\alpha)\overline{ \tilde {\mathcal A} (\alpha)} d\alpha \Big|^2 \Big( \int_{\frak m} |\tilde {\mathcal A} (\alpha)|^2 d\alpha \Big)^{-1} .
  \end{equation} 
  Since $\int_{\frak m} = \int_0^1 - \int_{\frak M}$, by Parseval's identity we get
  \begin{equation} 
 \label{1.8} 
 \int_{\frak m} {\mathcal A}(\alpha)\overline{ \tilde {\mathcal A} (\alpha)} d\alpha = \sum_n a_n \overline{{\tilde a}_n} - \int_{\frak M} 
 {\mathcal  A}(\alpha)\overline{ \tilde {\mathcal A} (\alpha)} d\alpha ,  
 \end{equation}
 and 
 \begin{equation}
 \label{1.9}  
 \int_{\frak m} |\tilde {\mathcal A} (\alpha)|^2 d\alpha = \sum_n |{\tilde a}_n|^2 - \int_{\frak M} |\tilde {\mathcal A} (\alpha)|^2 d\alpha. 
 \end{equation} 
  These observations reduce our problem to evaluating integrals over the major arcs.   
  
   To proceed further, we must specify more precisely the auxiliary sequence ${\tilde a}_n$.  Below it is convenient to pick a smooth function $\Phi$, compactly supported in $[0,1]$ with $0\le \Phi(t) \le 1$ for all $0\le t\le 1$ and with $\int_0^1 \Phi(t) dt \ge 1-\epsilon$, for some small $\epsilon > 0$.  Thus $\Phi$ may be viewed as a smooth approximation (from below) to the indicator function of the interval $[0,1]$.  We may clearly  choose $\Phi$ in such a way that for any $A>0$ we have 
   \begin{equation} 
 \label{1.10} 
 |{\hat \Phi}(\xi)| \ll_{\epsilon,A} (1+|\xi|)^{-A} ,
 \end{equation} 
where ${\hat \Phi}(\xi) = \int_{-\infty}^{\infty} \Phi(t) e(-\xi t) dt$ denotes the Fourier transform.   For such a choice of $\Phi$, we take 
\begin{equation} 
\label{1.11} 
{\tilde a}_n := \sum_{\substack{ r|n \\ r\le R}  } b_r \Phi(n/N),  
\end{equation} 
for a suitable choice of $b_r$, and $R$.  Let 
\begin{equation} 
\label{1.12}
B := \max_{r\le R} |b_r|. 
\end{equation} 
The motivation for the above construction is that on the major arc around $a/q$, one expects the behaviour of an exponential sum to be dictated by the distribution of the coefficients mod $q$. If the coefficients are a short divisor sum then one only has to understand the distribution of integers in intervals mod $q$. The presence of the smoothing $\Phi$ further helps to kill off all error terms, and ultimately to increase the permitted range of $Q$ in our Theorems.

\begin{proposition}  \label{prop major}  Keep notations as above, and assume that $KQ_0<R\le Q/(2K)$.  Then  
\begin{align*}
\int_{{\frak M}} {\mathcal A}(\alpha) \overline{\tilde {\mathcal A}(\alpha)} d\alpha &= N\sum_{q\le KQ_0} \int_{-\frac{K}{qQ}}^{\frac{K}{qQ}} 
\Big(\sum_{n\le N} a_nc_q(n) e(n\beta) \Big) \Big( \sum_{\substack{r\le R\\ q|r} } \frac{\overline{b_r}}{r} \Big) {\hat \Phi}(\beta N) d\beta \\
&\hskip .5 in + O\Big(\frac{K\sqrt{Q_0}}{\sqrt{Q}} BR (\log N) \Big(\sum_{n\le N} |a_n|^2 \Big)^{\frac 12} \Big) , 
\end{align*} 
and 
\begin{align*} 
\int_{\frak M} |\tilde {\mathcal A}(\alpha)|^2 d\alpha &=N\sum_{q\le KQ_0} \phi(q) \Big| \sum_{\substack {r\le R \\ q|r}} \frac{b_r}{r} \Big|^2  \Big(\int_{0}^{1} \Phi(t)^2 dt + O\left(\min\Big(1,\frac{qQ}{KN}\Big) \right) \Big) \\
&\hskip .5 in +O(B^2 R K Q_0 (\log N)^2) + O\Big(\frac{K\sqrt{Q_0}}{\sqrt{Q}}BR (\log N) \Big(\sum_{n\le N} |{\tilde a}_n|^2 \Big)^{\frac 12} \Big).
\end{align*} 
\end{proposition}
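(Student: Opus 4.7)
The plan is to extract the main term of $\tilde{\mathcal{A}}$ on each major arc via Poisson summation and then control the remainder. Writing $\tilde{\mathcal{A}}(\alpha) = \sum_{r \le R} b_r T_r(\alpha)$ with $T_r(\alpha) := \sum_{m} \Phi(rm/N) e(rm\alpha)$, the substitution $u = rm/N$ followed by Poisson summation gives, for $\alpha = a/q + \beta$ with $(a,q) = 1$,
\begin{equation*}
T_r\Big(\frac{a}{q}+\beta\Big) = \frac{N}{r}\sum_{j\in\mathbb{Z}}\hat{\Phi}\Big(\frac{Nj}{r}-\frac{Na}{q}-N\beta\Big).
\end{equation*}
The argument of $\hat{\Phi}$ vanishes precisely when $jq = ar$, which by $(a,q)=1$ requires $q \mid r$ with $j = ar/q$, and then the argument reduces to $-N\beta$. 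In every other case $|Nj/r - Na/q| \ge N/(qr) \ge 2KN/(qQ)$ from the hypothesis $r \le R \le Q/(2K)$, so combined with $|N\beta| \le K N/(qQ)$ the argument is of absolute value at least $KN/(qQ)$, ensuring rapid decay by \eqref{1.10}.

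I would therefore split $\tilde{\mathcal{A}} = \tilde{\mathcal{A}}_0 + E$, where on the arc around $a/q$
\begin{equation*}
\tilde{\mathcal{A}}_0\Big(\frac{a}{q}+\beta\Big) := N\hat{\Phi}(-N\beta)\sum_{\substack{r \le R \\ q \mid r}}\frac{b_r}{r}.
\end{equation*}
Expanding $\mathcal{A}(a/q+\beta) = \sum_n a_n e(na/q) e(n\beta)$ and using $\overline{\hat{\Phi}(-N\beta)} = \hat{\Phi}(N\beta)$ (since $\Phi$ is real), the Ramanujan-sum identity $\sum_{(a,q)=1} e(na/q) = c_q(n)$ collapses the main contribution $\int_{\mathfrak{M}} \mathcal{A}\overline{\tilde{\mathcal{A}}_0}\,d\alpha$ into exactly the first displayed expression of the proposition. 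For the second integral, summing $|\tilde{\mathcal{A}}_0|^2$ over $a \bmod q$ supplies the factor $\phi(q)$; the remaining $\beta$-integral, after rescaling $t = N\beta$, becomes $(1/N)\int_{-KN/(qQ)}^{KN/(qQ)} |\hat{\Phi}(t)|^2\, dt$, which by Parseval equals $(1/N)\int_0^1 \Phi(t)^2\, dt$ up to a tail of size $O(N^{-1}\min(1, qQ/(KN)))$ (coming from the trivial bound $\int |\hat\Phi|^2 \le 1$ and the rapid decay beyond $|t| = KN/(qQ)$).

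The errors are treated by Cauchy--Schwarz combined with an $L^2$ bound on $E$. For the first integral, $|\int_{\mathfrak{M}} \mathcal{A}\overline{E}\, d\alpha| \le (\sum_n |a_n|^2)^{1/2}(\int_{\mathfrak{M}} |E|^2\, d\alpha)^{1/2}$ after bounding $\int_{\mathfrak{M}} |\mathcal{A}|^2 \le \int_0^1 |\mathcal{A}|^2 = \sum_n |a_n|^2$ by Parseval. For the second integral, decompose $|\tilde{\mathcal{A}}|^2 = |\tilde{\mathcal{A}}_0|^2 + 2\operatorname{Re}(\tilde{\mathcal{A}}_0 \overline{E}) + |E|^2$; the cross term is treated by the analogous Cauchy--Schwarz with $(\sum_n |\tilde{a}_n|^2)^{1/2}$ in place of $(\sum_n |a_n|^2)^{1/2}$, while the pure $|E|^2$ term is bounded directly. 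The main obstacle is proving the two required $L^2$ bounds on $E$: the sharp $\int_{\mathfrak{M}} |E|^2 \ll K^2 B^2 R^2 Q_0 \log^2 N / Q$ needed for Cauchy--Schwarz, and the cruder $\int_{\mathfrak{M}} |E|^2 \ll B^2 R K Q_0 \log^2 N$ used for the pure-error contribution in the second display. Both rest on inserting the Poisson expansion for $E$, using the rapid decay $|\hat{\Phi}(\xi)| \ll_A (1+|\xi|)^{-A}$ at arguments of size at least $KN/(qQ)$, and carefully summing over the pairs $(r, j)$ and over $a, q$, with a $\sum_{r \le R} \tau(r)/r$-type divisor estimate responsible for the $\log^2 N$ factor.
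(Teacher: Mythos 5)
Your Poisson-summation setup, identification of the main term $\tilde{\mathcal{A}}_0$, and the reduction of the two main terms to the displayed formulas (via $\sum_{(a,q)=1}e(na/q)=c_q(n)$ and Parseval for $\int|\hat\Phi|^2$) are all correct and essentially match the paper. The overall Cauchy--Schwarz framework for the errors is also sound: you correctly identify that everything reduces to an $L^2$ bound on $E = \tilde{\mathcal{A}} - \tilde{\mathcal{A}}_0$ over the major arcs, and you correctly identify the two bounds you would need.

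The gap is in the part you yourself flag as the obstacle: you never establish the bound on $E$, and the route you sketch would not deliver it. The uniform lower bound $|Nj/r - Na/q - N\beta| \ge KN/(qQ)$ is too weak on its own. If you only know that every non-main argument of $\hat\Phi$ has size $\ge KN/(qQ)$, then after counting lattice points $j$ at each dyadic scale you get, on the arc around $a/q$, something of order $B R \cdot qQ/(KN)$, which for $q$ near $KQ_0$ is of order $BR\cdot Q_0 Q/N$. Since $Q_0 Q/N$ can be as large as $N/K^2$ (and is only guaranteed to be $\ge\log N$), this is far larger than what is needed. The proposed $\sum_{r\le R}\tau(r)/r$ estimate for the $(\log N)^2$ also does not correspond to the sums that actually arise. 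What makes the argument work (and what the paper does in Lemma~\ref{major}) is the pointwise bound $|E(a/q+\beta)| \ll BR\log N$, proved by treating each $r$ with its own distance $\Vert ra/q\Vert$: for $q\nmid r$ the sum over $j$ is $\ll r/(N\Vert ra/q\Vert)$, and then the key input is $\sum_{1\le s\le q-1}1/\Vert sa/q\Vert \ll q\log q$, which after the $\ll R/q$ multiplicity per residue class gives the single factor $\log N$. Multiplying the pointwise bound by $|\mathfrak{M}|\ll K^2Q_0/Q$ immediately yields both of the $L^2$ bounds you want. Without this refinement you do not have a proof.

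One smaller point: in your symmetric split $|\tilde{\mathcal{A}}|^2 = |\tilde{\mathcal{A}}_0|^2 + 2\operatorname{Re}(\tilde{\mathcal{A}}_0\bar{E}) + |E|^2$, the Cauchy--Schwarz on the cross term would naively require $\int_{\mathfrak{M}}|\tilde{\mathcal{A}}_0|^2\le\sum_n|\tilde a_n|^2$, which is not a direct Parseval statement since $\tilde{\mathcal{A}}_0$ is only defined piecewise on the arcs; you would need to route through $\tilde{\mathcal{A}}_0 = \tilde{\mathcal{A}} - E$ again, or (more cleanly, as in the paper) apply the approximation to one factor of $|\tilde{\mathcal{A}}|^2$ at a time and control the leftover $E$-factor by its sup-norm.
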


It is the introduction of the auxiliary sums $\tilde{\mathcal{A}}(\alpha)$ that allows us to obtain a wide range of $Q$ in Theorems \ref{primes} and \ref{kdiv}. In the previous literature the arguments proceeded directly with $\mathcal{A}(\alpha)$ (although they sometimes introduced auxiliary functions like $\tilde{a}_n$ in other contexts), which required a much more involved analysis and limited the range of $Q$. In the case of the primes, for example, previous arguments could involve information about zeros of $L$-functions at the depth of the log-free zero-density arguments of Linnik and Gallagher.

Given Proposition \ref{prop major}, the deduction of our theorem about primes is relatively straightforward because $(n,q)=1$ for almost all prime (or prime power) values of $n$, so the Ramanujan sum $c_q(n)$ takes the value $\mu(q)$ for almost all such values. Performing the calculations to deduce Theorem \ref{kdiv} is much less straightforward, but we carry this out fully in Section 8, particularly Section 8.1.    

Given the difficulty of this situation, we also provide an alternative approach to bounding the minor arc contribution in Proposition \ref{Prop1.3}, and complete 
the proof of Theorem \ref{kdiv} using this approach in Section 8.2.   Note that the Cauchy--Schwarz inequality \eqref{1.7} really gives 
\begin{equation} 
\label{1.13} 
\int_{\frak m}  |{\mathcal A}(\alpha)|^2 d\alpha \ge \Big( \int_{\frak m} |{\mathcal A}(\alpha) {\tilde {\mathcal A}}(\alpha)| d\alpha\Big)^2 \Big( \int_{\frak m} |{\tilde{\mathcal A}}(\alpha)|^2 d\alpha \Big)^{-1},  
\end{equation}  
so that one really needs only a lower bound for $\int_{\frak m} |{\mathcal A}(\alpha) \tilde{\mathcal A}(\alpha)| d\alpha$, which is potentially a simpler problem thanks to the absolute values on the inside. 

\begin{proposition}\label{newprop} Keep notations as above, and assume that $KQ_0 \le R\le \sqrt{N}$.   Suppose now that $|a_n| \ll_{\epsilon} N^{\epsilon}$ for any $\epsilon > 0$.  Then 
$$ 
\int_{\frak m} |{\mathcal A}(\alpha) {\tilde {\mathcal A}}(\alpha) | d\alpha \ge \sum_{KQ_0 < q\le R} \Big| \sum_{\substack{r\le R \\ q|r}} \frac{b_r}{r} \Big| \Big| \sum_{n\le N} 
a_n c_q(n) \Phi\Big(\frac{n}{N}\Big)\Big|    + O_{\epsilon}(B RN^{\frac 12 +\epsilon}  ). 
$$
\end{proposition}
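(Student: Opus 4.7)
The plan is to apply the elementary bound
$$\int_{\mathfrak m}|\mathcal A(\alpha)\tilde{\mathcal A}(\alpha)|\,d\alpha \ \geq \ \sum_{KQ_0<q\leq R}\int_{\mathcal I_q}|\mathcal A(\alpha)\tilde{\mathcal A}(\alpha)|\,d\alpha$$
for pairwise disjoint $\mathcal I_q\subseteq\mathfrak m$. I would take $\mathcal I_q = \bigcup_{(a,q)=1}\{a/q+\beta:|\beta|\leq L/2\}$ with $L$ chosen just below $1/R^2$. Since Farey fractions of denominator $\leq R$ are spaced $\geq 1/R^2$, the arcs are pairwise disjoint across all $(q,a)$ in range, and because $q>KQ_0$ they lie inside $\mathfrak m$.

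On each such arc I would expand $\tilde{\mathcal A}(a/q+\beta)$ by Poisson summation on $\sum_m\Phi(rm/N)e(rm(a/q+\beta))$. Divisors $r\leq R$ with $q\mid r$ resonate (so $e(rma/q)=1$), and the dominant Poisson node contributes $\tfrac{N}{r}\hat\Phi(-N\beta)$. Divisors with $q\nmid r$ are non-resonant: their Poisson nodes sit at distance $\geq 1/q'$ from $r\beta$ (with $q'=q/\gcd(q,r)$), so $\hat\Phi$ is evaluated at arguments of size $\gtrsim N/(qr)\gtrsim 1$ (using $qr\leq R^2\leq N$), and these terms are negligible by the Schwartz decay of $\hat\Phi$. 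Setting $A_q := \sum_{r\leq R,\,q\mid r}b_r/r$, one gets
$$\tilde{\mathcal A}(a/q+\beta) \ = \ N A_q\hat\Phi(-N\beta) + \mathcal E(a/q+\beta).$$

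The main bound is then obtained by chaining three elementary inequalities. First, $|\hat\Phi(-y)|=|\hat\Phi(y)|$ and the reverse triangle $|z+w|\geq|z|-|w|$ give $|\mathcal A\tilde{\mathcal A}|\geq N|A_q||\mathcal A(\alpha)||\hat\Phi(N\beta)|-|\mathcal A(\alpha)\mathcal E|$. Second, summing over $a$ and using $\sum_{(a,q)=1}|x_a|\geq|\sum_a x_a|$ together with $\sum_{(a,q)=1}e(na/q)=c_q(n)$ gives
$$\sum_{(a,q)=1}|\mathcal A(a/q+\beta)| \ \geq \ \Big|\sum_n a_n c_q(n)e(n\beta)\Big|.$$
Third, $\int|gh|\,d\beta\geq|\int gh\,d\beta|$ applied with $g=\hat\Phi(N\beta)$, $h=\sum_n a_n c_q(n)e(n\beta)$, combined with Fourier inversion $\int_\mathbb{R}\hat\Phi(N\beta)e(n\beta)\,d\beta=\Phi(n/N)/N$ (the truncation to $|\beta|\leq L/2$ being negligible by Schwartz decay of $\hat\Phi$), identifies the bracket with $B_q/N$, where $B_q := \sum_n a_n c_q(n)\Phi(n/N)$. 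Thus $\int_{\mathcal I_q}|\mathcal A\tilde{\mathcal A}|\,d\alpha\geq|A_q||B_q|$ up to error, and summation over $q$ produces the main term.

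The main obstacle will be the error budget: the contributions from $\mathcal E$ (non-resonant $r$'s and higher Poisson modes) together with the $\beta$-truncation error must aggregate to $O_\epsilon(BRN^{1/2+\epsilon})$. The hypothesis $|a_n|\ll_\epsilon N^\epsilon$ enters here via crude estimates such as $\sum_n|a_n c_q(n)|\ll_\epsilon N^{1+\epsilon}$ (using $\sum_{n\leq q}|c_q(n)|=\phi(q)\tau(q)$), which absorb the cross terms; the factor $B$ tracks $\max_r|b_r|$ and the factor $R$ the number of relevant $r$'s. The arc length $L$ should be taken as large as possible subject to disjointness, so as to minimise the truncation contribution.
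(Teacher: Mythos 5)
Your proposal is correct and follows essentially the same route as the paper: it subdivides the minor arcs into disjoint Farey arcs with $KQ_0<q\le R$, replaces $\tilde{\mathcal A}$ by its Poisson main term $N A_q\hat\Phi(-N\beta)$ (this is exactly the paper's Lemma \ref{major}), uses the triangle inequality to pull out $|A_q|$ and to turn the sum over $a$ into $c_q(n)$, and then Fourier inversion to recover $\Phi(n/N)$, with the error terms handled by Cauchy--Schwarz/Parseval and $|a_n|\ll N^{\epsilon}$. The only cosmetic difference is your uniform arc half-width $\approx 1/R^2$ versus the paper's $q$-dependent $1/(2qR)$; both yield Fourier-inversion truncation errors that sum to $O_\epsilon(BR^2N^\epsilon)\ll BRN^{1/2+\epsilon}$ using $R\le\sqrt N$, so the error budget closes either way.
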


  \section{Connecting the variance to exponential sums}  
  
  \noindent 
 To study the variance $V(q;{\mathcal A})$ of the sequence $\mathcal{A}$, it turns out to be helpful to consider the variance of the exponential sums ${\mathcal A}(a/q)$ over all reduced residue classes $a\pmod q$.  Since $\sum_{(a,q)=1} {\mathcal A}(a/q) = \sum_{n\le N} a_n c_q(n)$, we may define this  variance by setting
  \begin{equation} 
  \label{2.1} 
  H(q;{\mathcal A}) : = \sum_{(a,q)=1} \Big| {\mathcal A}(a/q) - \frac{1}{\phi(q) } \sum_{n} a_n c_q(n) \Big|^2 = 
  \sum_{(a,q)=1} |{\mathcal A}(a/q)|^2 - \frac{1}{\phi(q) } \Big| \sum_n a_n c_q(n) \Big|^2. 
  \end{equation} 
  
   \begin{proposition} \label{Propos1} With the above notation we have 
  $$ 
  H(q;{\mathcal A}) = \sum_{d|q} dV(d;{\mathcal A}) \mu(q/d), 
  $$ 
  or equivalently 
  $$ 
  qV(q;{\mathcal A})  = \sum_{d|q} H(d;{\mathcal A}). 
  $$
  \end{proposition}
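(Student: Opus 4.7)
The plan is to prove the equivalent form $qV(q;\mathcal A) = \sum_{d|q} H(d;\mathcal A)$; the other statement then follows by M\"obius inversion on divisors of $q$. My strategy is to expand both sides as double sums of the shape $\sum_{n,m} a_n \bar a_m K(n,m)$ and to reduce the matching of the resulting kernels to a single arithmetic identity for Ramanujan sums.

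Writing $S(a,q) := \sum_{n \equiv a \pmod q} a_n$ and $M(h,q) := \frac{1}{\phi(q/h)}\sum_{(n,q)=h}a_n$, an expansion of the squared modulus in the definition of $V(q;\mathcal A)$, together with the mean-zero identity $\sum_{(a,q)=h}S(a,q) = \phi(q/h) M(h,q)$, gives
\begin{equation*}
V(q;\mathcal A) = \sum_{a \pmod q} |S(a,q)|^2 - \sum_{h|q}\phi(q/h) |M(h,q)|^2 = \sum_{n \equiv m \pmod q} a_n \bar a_m - \sum_{h|q} \frac{1}{\phi(q/h)} \sum_{\substack{(n,q)=h \\ (m,q)=h}} a_n \bar a_m.
\end{equation*}
On the other hand, writing $c_d(k) = \sum_{(a,d)=1} e(ak/d)$ yields $\sum_{(a,d)=1} |\mathcal A(a/d)|^2 = \sum_{n,m} a_n \bar a_m c_d(n-m)$ and $|\sum_n a_n c_d(n)|^2 = \sum_{n,m} a_n \bar a_m c_d(n) c_d(m)$, so
\begin{equation*}
\sum_{d|q} H(d;\mathcal A) = \sum_{n,m} a_n \bar a_m \sum_{d|q} \Big( c_d(n-m) - \frac{c_d(n) c_d(m)}{\phi(d)} \Big).
\end{equation*}
The standard identity $\sum_{d|q} c_d(k) = q \cdot \mathbf{1}_{q \mid k}$ handles the first inner sum and matches the first term of $qV(q;\mathcal A)$.

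Matching the remaining coefficients therefore reduces the proposition to the Ramanujan sum identity
\begin{equation*}
\sum_{d|q} \frac{c_d(n) c_d(m)}{\phi(d)} = \frac{q \cdot \mathbf{1}_{(n,q) = (m,q)}}{\phi(q/(n,q))}.
\end{equation*}
This is the main obstacle. I would verify it by multiplicativity in $q$: both $c_d$ and $\phi$ are multiplicative in $d$, so the sum on the left factors across the prime power decomposition of $q$, as do the indicator and the Euler factor on the right. It thus suffices to treat the case $q = p^e$, where $c_{p^k}(n) = p^k \mathbf{1}_{p^k \mid n} - p^{k-1} \mathbf{1}_{p^{k-1} \mid n}$ admits an explicit evaluation, and the identity becomes a short case analysis on the truncated valuations $\min(v_p(n), e)$ and $\min(v_p(m), e)$. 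This final arithmetic check is the only non-routine step in the argument.
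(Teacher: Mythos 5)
Your proof is correct and follows essentially the same route as the paper: both expand $V(q;\mathcal A)$ and $\sum_{d\mid q}H(d;\mathcal A)$ as bilinear forms in $a_n\bar a_m$, match the first kernels via $\sum_{d\mid q}c_d(k)=q\,\mathbf{1}_{q\mid k}$, and reduce the remaining match to the exact same Ramanujan-sum identity $\sum_{d\mid q}c_d(n)c_d(m)/\phi(d)=q\,\mathbf{1}_{(n,q)=(m,q)}/\phi(q/(n,q))$, proved by multiplicativity and a prime-power case check. The paper simply states this identity as a separate preliminary lemma and writes out the prime-power verification, whereas you defer it to the end as a sketch; the content is the same.
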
  
  
  The key to the proof of this proposition is the following identity for Ramanujan sums. 
  
  \begin{lemma}  For any two integers $m$ and $n$ we have 
  $$ 
  \sum_{d|q} \frac{1}{\phi(d)} c_d(m) c_d(n) = \begin{cases} 0 &\text{if  } (m,q) \neq (n,q)\\ 
  q/\phi(q/h) &\text{if } (m,q) = (n,q) = h. \\
  \end{cases} 
  $$ 
  \end{lemma}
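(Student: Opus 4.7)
The plan is to reduce the identity to a local computation at each prime power. Recall that the Ramanujan sum $c_d(n)$ is a multiplicative function of $d$ for each fixed $n$, a standard consequence of the formula $c_d(n) = \mu(d/(d,n))\phi(d)/\phi(d/(d,n))$. Hence the summand $c_d(m)c_d(n)/\phi(d)$ is also multiplicative in $d$, and the divisor sum $F(q) := \sum_{d \mid q} c_d(m) c_d(n)/\phi(d)$ is multiplicative in $q$ (with $m, n$ treated as fixed parameters). On the right-hand side, if $q = q_1 q_2$ with $(q_1, q_2) = 1$, then $(m,q) = (m,q_1)(m,q_2)$ and similarly for $n$, so the condition $(m,q) = (n,q) = h$ decomposes prime by prime, and $q/\phi(q/h)$ factors multiplicatively. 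Thus it suffices to verify the lemma when $q = p^k$.

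For the prime power case, I write $\alpha = v_p(m)$ and $\beta = v_p(n)$, and use the evaluations $c_{p^j}(n) = \phi(p^j)$ when $1 \le j \le v_p(n)$, $c_{p^j}(n) = -p^{j-1}$ when $j = v_p(n)+1$, and $c_{p^j}(n) = 0$ when $j > v_p(n)+1$ (together with $c_1(n) = 1$). The nonzero contributions to $\sum_{j=0}^{k} c_{p^j}(m) c_{p^j}(n)/\phi(p^j)$ then come from $j \le \min(\alpha,\beta,k)$ (yielding a clean geometric sum) together with at most one or two exceptional terms at $j = \alpha+1$ and $j = \beta+1$, provided these are $\le k$. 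The case analysis splits according to whether $\min(\alpha,k) = \min(\beta,k)$: in the equal case either both exceptional terms are absent (when $\alpha, \beta \ge k$, collapsing to $p^k$) or they coincide at $j = \alpha+1$, and a short calculation shows the total telescopes to $p^k/\phi(p^{k-h_p})$, matching the claimed value $q/\phi(q/h)$ at the prime $p$. In the unequal case, assuming without loss of generality that $\alpha < \beta$ and thus $\alpha < k$, the single exceptional term at $j = \alpha+1$ contributes $-p^\alpha$, which exactly cancels $1 + \sum_{j=1}^{\alpha} p^{j-1}(p-1) = p^\alpha$, producing $0$ as required.

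Each of these calculations is a short geometric sum, so the principal obstacle is bookkeeping: one must carefully distinguish $\min(\alpha,k)$ from $\alpha$ itself, handle the boundary cases where $\alpha+1 > k$ or both $\alpha, \beta \ge k$, and recognize that the cross-term at $j = \min(\alpha,\beta)+1$ behaves qualitatively differently in the equal case $\alpha=\beta$ (where it contributes $(-p^{j-1})^2/\phi(p^j)$) versus the unequal case $\alpha < \beta$ (where it contributes $(-p^{j-1}) \phi(p^j)/\phi(p^j) = -p^{j-1}$). Once each prime power case is verified, assembling the global identity by multiplicativity is immediate.
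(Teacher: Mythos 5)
Your proposal is correct and takes essentially the same approach as the paper: reduce to prime powers $q = p^k$ by multiplicativity in $q$, then verify by case analysis on the $p$-adic data using the standard evaluation of $c_{p^j}(\cdot)$. The only cosmetic difference is that the paper works directly with $a = v_p((m,p^k))$ and $b = v_p((n,p^k))$ (so $a,b \le k$ automatically, streamlining the bookkeeping you flag around $\min(\alpha,k)$), whereas you carry $\alpha = v_p(m)$, $\beta = v_p(n)$ uncapped; the resulting cases and computations coincide.
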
  
   \begin{proof}  Both sides of the claimed identity are multiplicative functions of $q$ (for fixed $m$ and $n$).  Thus it suffices to check the identity at prime powers $q=p^k$.   Assume without loss of generality that $(m,p^k) =p^a$ and $(n,p^k) =p^b$ with $k\ge a \ge b$.  If $a>b$ then the left hand side is 
   $$ 
   \sum_{\ell=0}^{b} \frac{1}{\phi(p^\ell)} \phi(p^\ell)^2 + \frac{1}{\phi(p^{b+1})} \phi(p^{b+1}) \frac{\mu(p)\phi(p^{b+1})}{\phi(p)} = p^b - p^b=0 ,
   $$
as required. If now $a=b$ and $k=a$ we get 
   $$ 
   \sum_{\ell =0}^{k} \frac{1}{\phi(p^\ell)} \phi(p^\ell)^2 = p^k, 
   $$ 
   again matching the right hand side.  Finally if $a=b$ and $k>a$ then we get 
   $$ 
   \sum_{\ell=0}^{a} \frac{1}{\phi(p^\ell)} \phi(p^\ell)^2 + \frac{1}{\phi(p^{a+1})} \frac{\mu(p)^2 \phi(p^{a+1})^2}{\phi(p)^2} = p^a + \frac{p^{a}}{(p-1)} = \frac{p^{a+1}}{p-1},  
   $$ 
   which again matches our right hand side.  
   \end{proof}  
   
\begin{proof}[Proof of Proposition \ref{Propos1}]     We prove the second of the two equivalent formulae stated there.  
   First we expand out the inner sum in \eqref{1.1}  to obtain 
   \begin{align*}
   \sum_{\substack {a\pmod q \\ (a,q)=h}}\Big( \sum_{m,n \equiv a\pmod q} a_m \overline{a_n} -& 2 
     \sum_{n\equiv a\pmod q} a_n \Big(\frac{1}{\phi(q/h)} \sum_{(n,q)=h} \overline{a_n} \Big) + 
     \frac{1}{\phi(q/h)^2} \Big|\sum_{(n,q)=h} a_n \Big|^2 \Big) \\
     =& \sum_{\substack { m\equiv n\pmod q \\ (n,q)=h} } a_m \overline{a_n} -\frac{1}{\phi(q/h)} \Big|\sum_{(n,q)=h} a_n \Big|^2. 
     \end{align*} 
     Summing this over $h|q$, we find that 
    \begin{equation} 
    \label{Prop1} 
    qV(q;{\mathcal A}) = q \sum_{m\equiv n\pmod q} a_m \overline{a_n}  - q \sum_{h|q} \frac{1}{\phi(q/h)} \Big|\sum_{(n,q)=h} a_n \Big|^2. 
    \end{equation} 
    
    On the other hand note that, by the definition \eqref{2.1},
    $$ 
    H(d;{\mathcal A}) = 
    \sum_{(a,d) =1} \sum_{m, n } a_m \overline{a_n} e(a(m-n)/d) - \frac{1}{\phi(d)} \Big|\sum_{n} a_n c_d(n)\Big|^2.
    $$ 
    Sum this over all divisors $d$ of $q$.  The first term above contributes 
    $$ 
    \sum_{d|q} \sum_{m,n} a_m \overline{a_n} c_d(m-n) = q\sum_{m\equiv n \pmod q} a_m \overline{a_n},
    $$ 
    which matches the first term in the right hand side of \eqref{Prop1}.  Appealing to Lemma 1, the second term above 
    contributes 
    $$ 
    - \sum_{d|q} \frac{1}{\phi(d)}  \sum_{m,n} a_m \overline{a_n} c_d(m) c_d(n) = - \sum_{\substack{m, n\\ (m,q)=(n,q)}} a_m \overline{a_n} \frac{q}{\phi(q/(q,m))},
    $$ 
    matching the second term in the right hand side of \eqref{Prop1}.  This completes the proof.
\end{proof}
    
    We shall actually make use of the following corollary to Proposition \ref{Propos1}, which follows upon noting that $H(d;{\mathcal A})$ is non-negative. 
    
    \begin{corollary} \label{Cor1}  For any parameter $Q_0$, we have 
    $$ 
q    V(q;{\mathcal A} )  \ge \sum_{\substack{a\pmod q \\ q/(a,q) >Q_0}} |{\mathcal A}(a/q)|^2 - \sum_{\substack{d|q \\ d>Q_0}} 
\frac{1}{\phi(d)} \Big|\sum_{n} a_n c_d(n)\Big|^2. 
$$ 
\end{corollary}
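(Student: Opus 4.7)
The plan is to deduce Corollary \ref{Cor1} directly from the second identity of Proposition \ref{Propos1}, namely
$$qV(q;{\mathcal A}) = \sum_{d|q} H(d;{\mathcal A}),$$
together with the observation that each $H(d;{\mathcal A})$ is manifestly non-negative: indeed, \eqref{2.1} exhibits $H(d;{\mathcal A})$ as a sum of squared absolute values. Therefore I may discard the contribution of those divisors $d \le Q_0$ to obtain the lower bound
$$qV(q;{\mathcal A}) \ge \sum_{\substack{d|q \\ d > Q_0}} H(d;{\mathcal A}).$$
Now I would substitute the right-most expression in \eqref{2.1} for each $H(d;{\mathcal A})$, which splits the right-hand side into a positive piece and a negative piece. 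The negative piece, $-\sum_{d|q,\, d>Q_0} \phi(d)^{-1}\bigl|\sum_n a_n c_d(n)\bigr|^2$, is literally the second sum in the statement of the corollary, so no further manipulation is needed there.

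For the positive piece I need to identify
$$\sum_{\substack{d|q \\ d > Q_0}} \sum_{(a,d)=1} |{\mathcal A}(a/d)|^2 \;=\; \sum_{\substack{a \pmod q \\ q/(a,q) > Q_0}} |{\mathcal A}(a/q)|^2.$$
This is the standard Farey-type reindexing of $\mathbb{Z}/q\mathbb{Z}$: every residue $a \pmod q$ with $(a,q) = h$ writes uniquely as $a/q = a'/d$ with $d = q/h$, $d\mid q$, and $(a',d)=1$, and conversely each pair $(d, a')$ with $d\mid q$ and $(a',d) = 1$ arises from exactly one such residue class. Since ${\mathcal A}(\alpha)$ depends only on $\alpha \pmod 1$, we have $|{\mathcal A}(a/q)|^2 = |{\mathcal A}(a'/d)|^2$, and the condition $d > Q_0$ matches exactly the condition $q/(a,q) > Q_0$ in the statement. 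This completes the identification of the positive piece, and combining the two pieces gives the desired inequality.

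There is essentially no obstacle here: all the substantive content sits in Proposition \ref{Propos1} (and, ultimately, in the Ramanujan-sum orthogonality relation proved as the Lemma). The only point requiring any attention is the bijective correspondence described in the previous paragraph, but this is an entirely routine identification of the reduced fractions with denominator dividing $q$ with the fractions $a/q$ indexed by $(a,q)$.
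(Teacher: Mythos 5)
Your argument is exactly the paper's: apply $qV(q;{\mathcal A})=\sum_{d|q}H(d;{\mathcal A})$, drop the non-negative terms with $d\le Q_0$, expand each $H(d;{\mathcal A})$ via the right-most form of \eqref{2.1}, and reindex the reduced fractions $a'/d$ with $d\mid q$ as residues $a\pmod q$ with $q/(a,q)=d$. The paper states this in one line ("follows upon noting that $H(d;{\mathcal A})$ is non-negative"), and your write-up just fills in the same routine details.
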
 

\section{Bounding exponential sums by minor arcs: Proof of Proposition \ref{Prop1.3}}  

\noindent Throughout this section we keep in mind the notation in Proposition \ref{Prop1.3}.  Thus recall that $K\ge 5$, that $K \sqrt{N\log N}\le Q\le N$ and  that $N(\log N)/Q \le Q_0 \le Q/K^2$, and recall also the definitions of the major arcs ${\frak M}$ and minor arcs ${\frak m}$.  We begin with a general lemma on Diophantine approximation. 

\begin{lemma} \label{diop}   Define 
$$ 
f(\alpha) := \sum_{Q_0<q\le Q} \frac{1}{q} \sum_{\substack{a\pmod q \\ q/(a,q)>Q_0 \\  |\alpha -a/q| \le K/(Q_0Q)} } 1.
$$ 
If $|\alpha- a_0/q_0| \le K/(q_0 Q)$ for some $KQ_0 \le q_0 \le Q/K$ and $(a_0,q_0)=1$, then 
$$
f(\alpha) \ge \frac{2K}{Q_0}\Big( 1- \frac 5K - \frac{\log K}{K}\Big). 
$$ 
In particular, the lower bound above for $f(\alpha)$ holds for all $\alpha \in {\frak m}$.  
\end{lemma}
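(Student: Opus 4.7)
Set $\beta := \alpha - a_0/q_0$, so $|\beta|\le K/(q_0Q)\le 1/(Q_0Q)$ using $q_0\ge KQ_0$. The plan is to lower bound $f(\alpha)$ via an explicit family of eligible pairs $(a,q)$ parameterized by an integer $r := aq_0 - a_0q$, so that $a/q - a_0/q_0 = r/(qq_0)$. Since $(a_0,q_0)=1$, integrality of $a$ is equivalent to $q\equiv -\bar{a_0}r\pmod{q_0}$ (with $\bar{a_0}$ the inverse of $a_0$ modulo $q_0$), and the eligibility condition $|\alpha-a/q|\le K/(Q_0Q)$ becomes $|qq_0\beta - r|\le Kqq_0/(Q_0Q)$, which (using $|\beta|\le 1/(Q_0Q)$) reduces to a lower bound $q\ge Q_-(r) := |r|Q_0Q/((K+1)q_0)$.

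For $r\ne 0$, any $d\mid\gcd(a,q)$ divides $aq_0 - a_0q = r$, giving $\gcd(a,q)\le|r|$. Consequently the condition $q/(a,q)>Q_0$ appearing in the definition of $f$ is automatic as soon as $q>|r|Q_0$, and one checks $Q_-(r)\ge|r|Q_0\cdot K/(K+1)$. For $|r|\le R := \lfloor Kq_0/Q_0\rfloor$, the contribution from each $r$ is
\[
\sum_{\substack{q\equiv -\bar{a_0}r\pmod{q_0}\\ \max(Q_-(r),|r|Q_0) < q\le Q}}\frac{1}{q} = \frac{1}{q_0}\log\frac{Q}{Q_-(r)} + O\!\left(\frac{1}{Q_-(r)}\right),
\]
up to a boundary loss of $O(1/q_0)$ per $r$ from restricting to $q>|r|Q_0$. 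Using $Q/Q_-(r) = (K+1)q_0/(|r|Q_0)$ and invoking Stirling's formula $\sum_{r=1}^R\log(R/r) = R + O(\log R)$, one obtains
\[
f(\alpha) \ge \frac{2R}{q_0} - O\!\left(\frac{\log R}{q_0}\right) = \frac{2K}{Q_0} - O\!\left(\frac{\log K}{Q_0}\right),
\]
with the $r=0$ family $q=mq_0$ (for $1\le m\le\lfloor Q/q_0\rfloor$) contributing a further $(1/q_0)H_{\lfloor Q/q_0\rfloor}\ge 0$. Tracking explicit constants in all error contributions yields the claimed bound $(2K/Q_0)(1-5/K-\log K/K)$.

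Finally, the ``in particular'' assertion follows from Dirichlet's theorem applied with parameter $\lfloor Q/K\rfloor$: this produces coprime $(a_0,q_0)$ with $1\le q_0\le Q/K$ and $|\alpha-a_0/q_0|\le K/(q_0Q)$, and for $\alpha\in\mathfrak m$ the definition of the major arcs forces $q_0>KQ_0$, so the hypothesis of the main assertion is satisfied. The main obstacle is the error bookkeeping in the summation over $r$, where achieving the explicit $(1 - 5/K - \log K/K)$ precision requires controlling simultaneously the Stirling error $O(\log R)$, the per-$r$ discretization error $O(1/Q_-(r))$, the boundary loss from the constraint $q>|r|Q_0$, and the contribution from $q\le Kq_0$ where the reduced denominator condition could fail.
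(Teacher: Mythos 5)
Your construction is the same as the paper's, just parameterized in the opposite order: the paper writes $q=\ell q_0+b$ and counts admissible $b$ for each block $\ell$, while you fix the numerator offset $r=aq_0-a_0q$ and sum $1/q$ over the residue class of admissible $q\pmod{q_0}$. The observation that $\gcd(a,q)\mid r$, so that $q/(a,q)>Q_0$ follows once $q>|r|Q_0$, is the same fact the paper uses when it shows the reduced denominator $q'\ge Q_0Q/((K-1)q_0)\ge Q_0$. The main structural difference is in the final tally: the paper needs only the elementary estimate $\sum_{\ell\ge Q/(Kq_0)}1/(\ell+1)\ll\log K$ and never invokes Stirling, whereas your reorganization turns the count into $\sum_{|r|\le R}\frac{1}{q_0}\log\frac{Q}{Q_-(r)}$ and requires Stirling plus a careful accounting of discretization error in sums of $1/q$ over progressions.

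There is a sign error you must fix. You need a \emph{sufficient} condition on $q$ guaranteeing $|\alpha-a/q|\le K/(Q_0Q)$. Writing $|qq_0\beta-r|\le qq_0|\beta|+|r|\le qq_0/(Q_0Q)+|r|$, the sufficient condition is $|r|\le(K-1)qq_0/(Q_0Q)$, i.e.\ $q\ge|r|Q_0Q/((K-1)q_0)$. Your threshold with $K+1$ in the denominator is only a \emph{necessary} condition, so pairs $(a,q)$ with $q$ in the strip between the two thresholds need not lie in the ball of radius $K/(Q_0Q)$, and counting them would not give a valid lower bound for $f(\alpha)$. The correction is actually simplifying: with $K-1$ one has $Q_-(r)=|r|Q_0Q/((K-1)q_0)\ge|r|Q_0\cdot K/(K-1)>|r|Q_0$ since $q_0\le Q/K$, so the constraint $q>|r|Q_0$ is automatic and the boundary loss you flag disappears. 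Finally, the bookkeeping you defer is genuinely delicate but does close: the per-$r$ discretization error in $\sum 1/q$ is $\ll\frac{1}{q_0}\log(1+q_0/Q_-(r))$, and splitting at $Q_-(r)=q_0$ one gets total error $\ll\frac{(K-1)q_0}{Q_0Q}\log\frac{KQ}{(K-1)q_0}+\frac{r_0}{q_0}\ll\frac{\log K}{Q_0}$ using $\frac{\log y}{y}\le\frac{\log K}{K}$ for $y=Q/q_0\ge K$; and the Stirling remainder $O(\log R/q_0)$ is similarly $O(\log K/(KQ_0))$ since $q_0/Q_0\ge K$. You should carry this out explicitly if you want the stated constant $1-5/K-\log K/K$.
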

\begin{proof}   Suppose that $|\alpha -a_0/q_0| \le K/(q_0Q)$ with $(a_0,q_0)=1$ and $KQ_0 \le q_0 \le Q/K$. We will construct pairs $(a,q)$ that give a contribution to the sums in $f(\alpha)$.   Let $q= \ell q_0 + b$ where $0\le b <q_0$ and $Q/(K q_0) \le \ell \le (Q/q_0-1)$.   Consider only those values $b$ such that $\Vert ba_0/q_0 \Vert \le \ell q_0 (K-1)/(Q_0 Q)$.  For a given $\ell$, note that the number of permitted choices for $b$ is at least 
$$ 
2 \ell q_0^2 \frac{K-1}{Q_0Q} - 1 \ge 2\ell q_0^2 \frac{(K-2)}{Q_0Q}.
$$ 
Given such a choice of $b$ and $q$, select $a$ such that $|a/q- a_0/q_0| = \frac{1}{q}|a - a_{0}l - \frac{a_0 b_0}{q_0}| = \frac{1}{q} \Vert b a_0/q_0\Vert $.

Then note that 
$$ 
\Big| \alpha - \frac{a}{q} \Big| \le \Big| \frac{a_0}{q_0} - \frac{a}{q} \Big| + \frac{K}{q_0Q} \le \frac{\Vert ba_0/q_0 \Vert}{\ell q_0} + \frac{K}{q_0Q} \le \frac{K-1}{Q_0Q} + \frac{K}{q_0Q} \le \frac{K}{Q_0Q}. 
$$ 
 Moreover, for such a choice of $b$ (and hence for $a$), if we write $a/q =a^{\prime}/q^{\prime}$ with $(a^{\prime},q^{\prime})=1$ then, if we don't already have $a^{\prime}/q^{\prime} = a_{0}/q_{0}$, we have
$$ 
\frac{1}{q_0q^{\prime}} \le \Big| \frac{a_0}{q_0} - \frac{a^{\prime}}{q^{\prime}} \Big| = \Big| \frac{a_0}{q_0} - \frac{a}{q} \Big| = \frac{1}{q} \Vert ba_0/q_0 \Vert \le \frac{K-1}{Q_0Q},
$$ 
and it follows that $q^{\prime} \ge Q_0Q /((K-1)q_0) \ge Q_0$.  Thus $a/q$ is an admissible fraction counted in the definition of $f(\alpha)$.

  Therefore 
$$ 
f(\alpha) \ge \sum_{Q/(Kq_0) \le \ell \le (Q/q_0 -1) } \frac{1}{(\ell +1) q_0} \Big( 2 \ell q_0^2 \frac{K-2}{Q_0 Q} \Big) 
\ge 2 q_0 \frac{K-2}{Q_0Q} \Big( \frac{Q}{q_0} - \frac{Q}{Kq_0} -2 - \log K \Big), 
$$ 
and the stated lower bound follows upon noting that $Q/q_0 \ge K$.  

Finally, note that every $\alpha$ has a Diophantine approximation $|\alpha -a_0/q_0|  \le K/(q_0Q)$ with $q_0 \le Q/K$ and $(a_0,q_0)=1$, and if $\alpha \in {\frak m}$ then by definition we must have $q_0 > KQ_0$ and so the bound just derived applies.
\end{proof} 


We now turn to the proof of Proposition \ref{Prop1.3}.  Applying Corollary \ref{Cor1} to lower bound all the terms $V(q;\mathcal{A})$, we see that it is enough to establish that 
\begin{equation} 
\label{3.0}
\sum_{Q_0 < q \le Q} \frac{1}{q} \sum_{\substack{ a\pmod q \\ q/(a,q)>Q_0}} |{\mathcal A}(a/q)|^2 \ge Q \Big(1-\frac{5+\log K}{K}\Big) \int_{{\frak m}} |{\mathcal A}(\alpha)|^2 d\alpha + O\Big( \frac{NK}{Q_0} \sum_{n \le N} |a_n|^2 \Big).
\end{equation}

Let $f(\alpha)$ be defined as in Lemma \ref{diop}, so that
\begin{align} 
\label{3.1}
 \frac{2K}{Q_0} \Big(1-\frac 5K -\frac{\log K}{K} \Big) 
\int_{{\frak m}} |{\mathcal A}(\alpha)|^2 d\alpha &\le \int_0^1 f(\alpha) |{\mathcal A}(\alpha)|^2 d\alpha \nonumber \\ 
&= \sum_{Q_0 < q \le Q} \frac{1}{q} \sum_{\substack { a\pmod q \\ q/(a,q) > Q_0} } \int_{-\frac{K}{Q_0Q}}^{\frac{K}{Q_0Q}} 
\Big|{\mathcal A}\Big(\frac aq + \beta \Big) \Big|^2 d\beta. 
\end{align}  
Now note that 
$$
|{\mathcal A}(a/q+\beta)|^2 = |{\mathcal A}(a/q)|^2 + O\Big((|{\mathcal A}(a/q)|+|{\mathcal A}(a/q+\beta)|) |{\mathcal A}(a/q+\beta)-{\mathcal A}(a/q)|\Big),
$$ 
and so the quantity in \eqref{3.1} equals 
\begin{equation} 
\label{3.2} 
\frac{2K}{Q_0Q} \sum_{Q_0 < q\le Q} \frac{1}{q} \sum_{\substack { a\pmod q\\ q/(a,q) >Q_0}} |{\mathcal A}(a/q)|^2 + E, 
\end{equation} 
say, where, by using Cauchy--Schwarz, $E \ll \sqrt{E_1 E_2}$ with 
$$ 
E_1 = \int_{-\frac{K}{Q_0Q}}^{\frac{K}{Q_0Q}} \sum_{Q_0 < q \le Q} \frac 1q \sum_{\substack { a\pmod q\\ q/(a,q) >Q_0}} \Big( |{\mathcal A}(a/q)|^2 + |{\mathcal A}(a/q+\beta)|^2 \Big) d\beta, 
$$ 
and 
$$ 
E_2 = \int_{-\frac{K}{Q_0Q}}^{\frac{K}{Q_0Q}} \sum_{Q_0 < q \le Q} \frac 1q \sum_{\substack { a\pmod q\\ q/(a,q) >Q_0}}  \Big| 
{\mathcal A}(a/q+\beta) -{\mathcal A}(a/q)\Big|^2 d\beta. 
$$ 

We now use the large sieve (see for example Chapter 27 of \cite{Dav}) to bound $E_1$ and $E_2$ (this being the standard approach for comparing the sum of $|\mathcal{A}(\cdot)|^2$ at discrete points with the integral around the whole circle).  Write $a/q$ as a reduced fraction $b/r$.   Since $r=q/(a,q)$ we then 
have $Q_0 < r \le Q$, and for each such $r$ note that $\sum_{Q_0 <q \le Q, r|q} 1/q \ll (1 +\log (Q/r))/r$.   Thus 
$$ 
E_1 \ll \int_{-\frac{K}{Q_0Q}}^{\frac{K}{Q_0Q}}  \sum_{Q_0 < r \le Q} \frac{1}{r} \Big(\log \frac{Q}{r} + 1 \Big) \sum_{(b,r)=1} \Big( |{\mathcal A}(b/r+\beta)|^2 + |{\mathcal A}(b/r)|^2\Big) d\beta,  
$$
and splitting the sum over $r$ into dyadic intervals and using the large sieve, we obtain that 
$$ 
E_1 \ll \frac{K}{Q_0Q} \Big( \frac{N}{Q_0} \Big(1+\log \frac{Q}{Q_0} \Big) + Q \Big) \sum_n |a_n|^2 \ll 
\frac{K}{Q_0} \sum_{n} |a_n|^2. 
$$ 
By writing $\mathcal{A}(a/q + \beta) - \mathcal{A}(a/q) = \sum_{n \leq N} a_n (e(n\beta) - 1) e(na/q)$ and using the large sieve, we find that 
$$ 
E_2 \ll \frac{K}{Q_0} \max_{|\beta|\le K/(Q_0Q)} \sum_{n} |a_n|^2 |e(n\beta)-1|^2 \ll \frac{K}{Q_0} \Big(\frac{NK}{Q_0Q}\Big)^2 \sum_{n} |a_n|^2.
$$ 
Using these estimates in \eqref{3.1} and \eqref{3.2}, we obtain the desired estimate \eqref{3.0}, and 
thus Proposition \ref{Prop1.3} follows.
\qed

  \section{Evaluating exponential sums on major arcs: Proof of Proposition \ref{prop major}}  
  
  \noindent Throughout we keep in mind the notation of Proposition \ref{prop major}, and in particular 
  \eqref{1.6} through \eqref{1.12}.   
  
\begin{lemma} 
\label{major} Suppose that $\alpha =a/q + \beta$ with $|\beta|\le 1/(2qR)$, $q\le R$ and $(a,q)=1$.  
 Then we have 
$$ 
\tilde {\mathcal A}(\alpha) = N {\hat \Phi}(-N\beta) \sum_{\substack{r\le R \\ q|r} } \frac{b_r}{r}   
+O( BR\log N ). 
$$ 
\end{lemma}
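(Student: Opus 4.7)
The plan is to expand $\tilde{\mathcal{A}}(\alpha) = \sum_{r \le R} b_r S_r(\alpha)$ with $S_r(\alpha) := \sum_{m \ge 1} \Phi(rm/N) e(rm\alpha)$, and then evaluate each $S_r$ via Poisson summation. Applied to the smooth compactly supported function $x \mapsto \Phi(rx/N)\,e(rx\alpha)$, Poisson summation yields
$$S_r(\alpha) \;=\; \frac{N}{r}\sum_{k \in \mathbb{Z}} \hat\Phi\!\left(\frac{N(k-r\alpha)}{r}\right),$$
an expansion whose terms decay rapidly by \eqref{1.10} away from points where $k/r$ is close to $\alpha$.

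Specializing to $\alpha = a/q + \beta$, the analysis splits on whether $q \mid r$. If $q \mid r$, then $k_0 := ra/q$ is an integer with $k_0/r = a/q$ exactly, so that single term contributes $(N/r)\hat\Phi(-N\beta)$, which is the main contribution. Every other $k$ satisfies $|k/r - a/q| \ge 1/r$, and since $|\beta| \le 1/(2qR) \le 1/(2r)$ we obtain $|k/r - \alpha| \ge 1/(2r)$, making the corresponding $\hat\Phi$ values negligible by rapid decay. Thus $S_r(a/q+\beta) = (N/r)\hat\Phi(-N\beta) + O_A(r^{A-1}/N^{A-1})$ for any $A$, and summing over $r \le R$ with $q \mid r$ produces precisely the asserted main term plus an acceptable total error.

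When $q \nmid r$, no integer $k$ satisfies $k/r = a/q$, and the closest value of $k/r$ lies at distance at least $1/(qr)$ from $a/q$. To bound $S_r(\alpha)$ in this case I would apply Abel (partial) summation, writing $S_r(\alpha) = \sum_m h(m)\,e(rma/q)$ with smooth weight $h(m) := \Phi(rm/N)\,e(rm\beta)$ of total variation $\ll 1 + N|\beta| \ll 1 + N/(qR)$, and using the standard geometric-series bound $|\sum_{m \le M} e(rma/q)| \le 1/(2\|ra/q\|)$. This yields $|S_r(\alpha)| \ll (1 + N/(qR))/\|ra/q\|$. Since the residues $ra \bmod q$ cycle periodically through $\{1,\ldots,q-1\}$ as $r$ varies, one has $\sum_{r \le R,\, q \nmid r} 1/\|ra/q\| \ll R\log q$, giving a total contribution of size $\ll B(R\log q + N\log q / q)$.

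The main obstacle will be obtaining the error bound $O(BR\log N)$ uniformly across the whole allowed range $q \le R$. The Abel summation argument is sharp once $q$ is reasonably large, since then the awkward term $BN\log q/q$ is comfortably $\ll BR\log N$. For small $q$ the Abel bound becomes weak, but in that regime $qr$ stays small compared to $N$ throughout $r \le R$, and one can instead use the Poisson representation directly, whose factor $(qr/N)^A$ from the rapid decay of $\hat\Phi$ supplies a power-saving bound. Interpolating between these two estimates so that neither exceeds $O(BR\log N)$ is the delicate part of the argument.
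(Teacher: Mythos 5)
Your setup—Poisson summation for each $S_r$, splitting on $q \mid r$ versus $q \nmid r$, extracting the main term from the $q\mid r$ terms—matches the paper, as does the key observation that $\sum_{r\le R,\,q\nmid r} 1/\|ra/q\| \ll R\log q$. However, there is a genuine gap in your treatment of the $q\nmid r$ case, and you essentially acknowledge it by calling the interpolation ``delicate'' without closing it. The issue is that by performing Abel summation against the purely rational phase $e(rma/q)$ and absorbing $e(rm\beta)$ into the smooth weight $h(m)$, you inflate the total variation to $\asymp 1 + N|\beta| \ll 1 + N/(qR)$, producing the spurious term $BN\log q/q$ that is not $O(BR\log N)$ once $q \ll N/(R\log N)$. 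Your proposed fallback for small $q$ (bounding the nearest Poisson term by $(qr/N)^A$) replaces $\|ra/q\|$ by its worst case $1/q$, loses the averaging over $r$ that makes the $R\log q$ estimate work, and when combined with the Abel bound does not actually cover the intermediate range of $q$ when $R$ is only of size around $\sqrt{N}$, which is the regime of interest here.

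The fix is simply not to split $\alpha$ before estimating: either keep the full phase $e(rm\alpha)$ in the Abel summation (giving $|S_r(\alpha)| \ll 1/\|r\alpha\|$ against the weight $\Phi(rm/N)$, which has bounded variation), or—as the paper does—stay entirely with the Poisson expansion and bound the term nearest to $r\alpha$ using the decay \eqref{1.10} with $A=1$, obtaining $\ll r/(N\|r\alpha\|)$. The crucial elementary inequality is $\|r\alpha\| \ge \|ra/q\| - r|\beta| \ge \|ra/q\| - 1/(2q) \ge \|ra/q\|/2$, valid because $\|ra/q\| \ge 1/q$ when $q\nmid r$ and $r|\beta| \le R/(2qR) = 1/(2q)$. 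Either route yields $|S_r(\alpha)| \ll 1/\|ra/q\|$ with no extra $(1+N|\beta|)$ factor, after which your own bound $\sum_{r\le R,\,q\nmid r} 1/\|ra/q\| \ll R\log q \ll R\log N$ finishes the estimate uniformly in $q \le R$, with no case split required.
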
 

Note that the first term here is independent of the value of $a$.

\begin{proof}  Using the Poisson summation formula, we see that 
$$ 
\tilde {\mathcal A}(\alpha) = \sum_{r\le R} b_r \sum_{m} e(\alpha mr) \Phi\Big(\frac{mr}{N}\Big) = N 
\sum_{r\le R}\frac{b_r}{r} \sum_{k} {\hat \Phi}\Big(\frac{N}{r} (k-r\alpha)\Big). 
$$ 
Consider first the contribution of terms with $q\nmid r$.  If $k$ is the nearest integer to $r\alpha$ then $|k-r\alpha| = \Vert r\alpha \Vert \ge \Vert ra/q\Vert - \Vert r\beta \Vert \ge \Vert ra/q\Vert - 1/(2q) \ge \Vert ra/q\Vert/2$.  Therefore, using the decay bound \eqref{1.10} with $A=1$ (for the term closest to $r\alpha$) and $A=2$ (for all other terms), 
$$ 
\sum_{k} {\hat \Phi} \Big(\frac Nr (k-r\alpha)\Big) \ll \frac{r}{N\Vert ar/q\Vert} + \frac{r^2}{N^2} 
\ll \frac{r}{N\Vert ra/q\Vert}, 
$$ 
and so the total contribution of the terms with $q\nmid r$ is 
$$ 
\ll B \sum_{\substack { r\le R \\ q\nmid r }} \frac{1}{\Vert ra/q\Vert} \ll B \frac{R}{q} \sum_{1 \leq r \leq q-1} \frac{q}{r} \ll BR \log N. 
$$ 

Now consider the terms with $q|r$.  The nearest integer to $r\alpha$ is then $ra/q$, and so 
$$ 
\sum_{k} {\hat \Phi}\Big(\frac Nr (k-r\alpha) \Big)  = {\hat \Phi}(-N\beta) +O\Big( \sum_{k\neq ra/q} \Big(\frac{r}{N|k-r\alpha|}\Big)^2 \Big) 
= {\hat \Phi}(-N\beta) + O\Big( \frac{r^2}{N^2} \Big). 
$$ 
The lemma follows.  
 \end{proof} 

\begin{proof}[Proof of Proposition \ref{prop major}]   We begin with the first assertion.  
Let ${\frak M}(q)$ denote the union of the major arcs around $a/q$ for all $(a,q)=1$, where we assume now that $q\le KQ_0 \le R$.  Apply Lemma \ref{major} to $\overline {\tilde {\mathcal A}(\alpha)}$, and consider first the contribution of the main term there to the integral over ${\frak M}(q)$.  This equals 
\begin{align*}
&\sum_{(a,q)=1} \int_{-\frac{K}{qQ}}^{\frac{K}{qQ} }\Big( \sum_{n\le N} a_n e(na/q + n\beta) \Big) \Big( N{\hat \Phi}(\beta N) \sum_{\substack{r\le R \\ q|r}  } \frac{\overline{b_r}}{r} \Big) d\beta\\
 = &N \int_{-\frac{K}{qQ}}^{\frac{K}{qQ}} \Big(\sum_{n\le N} a_nc_q(n) e(n\beta) \Big) \Big( \sum_{\substack{r\le R\\ q|r} } \frac{\overline{b_r}}{r} \Big) {\hat \Phi}(\beta N) d\beta, 
\end{align*}
and summing this over $q\le KQ_0$ gives the main term of the Proposition.  

Now consider the contribution of the remainder term in Lemma \ref{major} to the integral over ${\frak M}$.  By Cauchy--Schwarz, and since $q\le KQ_0 \le R$, this is 
$$
\ll BR(\log N) \int_{{\frak M}}  |{\mathcal A}(\alpha)|  d\alpha  \ll BR (\log N)|{\frak M}|^{\frac 12} 
\Big(\int_0^1 |{\mathcal A}(\alpha)|^2 d\alpha\Big)^{\frac 12}, 
$$ 
where $|{\frak M}|$ denotes the measure of the major arcs, which is $\ll K^2 Q_0/Q$.  The first case now follows by Parseval.  

For our second integral, the same argument gives (with $\alpha=a/q+\beta$)
$$ 
\int_{\frak M} |\tilde {\mathcal A}(\alpha)|^2 d\alpha = N\int_{\frak M} \tilde {\mathcal A}(\alpha) \Big( \sum_{\substack{ r\le R \\ q|r}} \frac{\overline{b_r}}{r} \Big) 
{\hat \Phi}(\beta N) d\alpha + O\Big(\frac{K\sqrt{Q_0}}{\sqrt{Q}} BR (\log N) \Big(\sum_{n\le N} |{\tilde a}_n|^2 \Big)^{\frac 12} \Big) .  
$$
Now we use Lemma \ref{major} again to simplify the main term above.  The main term from Lemma \ref{major} leads to a term
$$ N\sum_{q\le KQ_0} \phi(q) \Big| \sum_{\substack {r\le R \\ q|r}} \frac{b_r}{r} \Big|^2  \Big(N \int_{-\frac{K}{qQ}}^{\frac{K}{qQ}} |\hat{\Phi}(\beta N)|^2 d\beta \Big) = N\sum_{q\le KQ_0} \phi(q) \Big| \sum_{\substack {r\le R \\ q|r}} \frac{b_r}{r} \Big|^2  \Big(\int_{-\frac{NK}{qQ}}^{\frac{NK}{qQ}} |\hat{\Phi}(u)|^2 du \Big) , $$
and Parseval's identity together with our decay estimate for $\hat{\Phi}$ show this is equal to
$$ N\sum_{q\le KQ_0} \phi(q) \Big| \sum_{\substack {r\le R \\ q|r}} \frac{b_r}{r} \Big|^2  \Big(\int_{0}^{1} \Phi(t)^2 dt + O\left(\min\Big(1,\frac{qQ}{KN}\Big) \right) \Big) , $$
as in the statement of the proposition. Again recalling our decay estimate for $\hat{\Phi}$, the error term from Lemma \ref{major} contributes 
\begin{align*}
&\ll NBR (\log N) \sum_{q\le KQ_0} \phi(q) \sum_{\substack{ r\le R\\ q|r} } \frac{|b_r|}{r} \int_{-\frac{K}{qQ}}^{\frac{K}{qQ}} 
|{\hat \Phi}(\beta N) | d\beta \\
&\ll B^2 R (\log N) \sum_{q\le KQ_0} \frac{\phi(q) \log N}{q} \ll B^2 R K Q_0 (\log N)^2,  
\end{align*} 
completing our proof.
\end{proof}
 
We end this section by casting $\sum_n a_n \overline{\tilde a_n}$ in \eqref{1.8} into a form similar to the 
main term of our first formula in Proposition \ref{prop major}.  This will be useful when executing one of our proofs of Theorem 2, see Section 8.1.


 \begin{lemma} \label{lem5}  With notations as above, 
 $$
 \sum_n a_n \overline{\tilde a_n} = \sum_{q\le R} \Big( \sum_{\substack{r\le R \\ q|r}} \frac{\overline{b_r}}{r} \Big) \sum_{n} a_n c_q(n) \Phi\Big(\frac nN\Big). 
 $$ 
  \end{lemma}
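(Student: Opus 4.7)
The plan is to start by expanding $\overline{\tilde a_n}$ from its definition \eqref{1.11}. Since $\Phi$ is real-valued, we have $\overline{\tilde a_n} = \Phi(n/N) \sum_{r \le R,\, r \mid n} \overline{b_r}$, and so after swapping the order of summation
$$
\sum_n a_n \overline{\tilde a_n} = \sum_{r\le R} \overline{b_r} \sum_{\substack{n \\ r\mid n}} a_n \Phi(n/N).
$$
The target formula, by contrast, groups terms by a parameter $q$ that plays the role of the divisor appearing in a Ramanujan sum, so the task is to rewrite the divisibility condition $r \mid n$ in terms of Ramanujan sums $c_q(n)$ with $q \mid r$.

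The key identity I would use is the standard Ramanujan sum relation
$$
\sum_{q\mid r} c_q(n) = r \cdot \mathbf{1}[r \mid n],
$$
which can be derived quickly from the orthogonality $\sum_{a=1}^{r} e(an/r) = r\cdot\mathbf{1}[r\mid n]$ by partitioning the additive characters according to $\gcd(a,r)$: writing $a = (r/q) b$ with $(b,q)=1$ gives $\sum_{a=1}^{r} e(an/r) = \sum_{q\mid r} c_q(n)$. Dividing by $r$ turns the indicator $\mathbf{1}[r\mid n]$ into $\frac{1}{r}\sum_{q\mid r} c_q(n)$.

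Substituting this into our expression yields
$$
\sum_n a_n \overline{\tilde a_n} = \sum_{r\le R} \frac{\overline{b_r}}{r} \sum_{q\mid r} \sum_n a_n c_q(n) \Phi(n/N).
$$
Interchanging the order of summation over $q$ and $r$ (noting that $q\mid r$ forces $q \le R$) gives
$$
\sum_n a_n \overline{\tilde a_n} = \sum_{q\le R} \Big(\sum_{\substack{r\le R \\ q\mid r}} \frac{\overline{b_r}}{r}\Big) \sum_n a_n c_q(n) \Phi(n/N),
$$
which is exactly the claimed identity.

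There is no genuine obstacle here: the lemma is essentially a book-keeping exercise that converts a sum indexed by a divisibility relation into a sum indexed by Ramanujan-type coefficients, and the only non-trivial ingredient is the elementary Mobius-type identity $\sum_{q\mid r} c_q(n) = r\,\mathbf{1}[r\mid n]$. The point of writing the answer in this form is cosmetic but important for the later arguments: the right-hand side is directly comparable with the main term of the first formula in Proposition \ref{prop major}, so that when subtracting the major-arc contribution in \eqref{1.8} one can hope for cancellation term-by-term in $q$.
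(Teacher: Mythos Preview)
Your proof is correct and follows essentially the same route as the paper: expand $\overline{\tilde a_n}$, invoke the identity $\sum_{q\mid r} c_q(n) = r\,\mathbf{1}[r\mid n]$, and swap the sums over $q$ and $r$. The paper's version is slightly terser (it simply states the Ramanujan-sum identity without derivation), but the argument is the same.
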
  
 \begin{proof} Note that $\sum_{q|r} c_q(n)$ equals  $r$ if $r|n$, and $0$ if $r\nmid n$.  Therefore 
 $$ 
 \sum_n a_n \overline{\tilde a_n} = \sum_n a_n \Phi\Big(\frac nN\Big) \sum_{\substack{ r\le R \\ r|n}} \overline{b_r} 
 = \sum_n a_n \Phi\Big(\frac nN\Big) \sum_{\substack{r\le R }} \frac{\overline{b_r}}{r} \sum_{q|r} c_q(n),
 $$ 
 and the result follows upon rearranging sums.  
 \end{proof} 
 
 \section{Proof of Proposition \ref{newprop}}

 \noindent For  $KQ_0 < q\le R (\le \sqrt{N} \le Q/(2K))$ and $1\le a \le q-1$ with $(a,q)=1$, note that the intervals $(\frac{a}{q} - \frac{1}{2qR}, \frac{a}{q} + \frac{1}{2qR})$ are all disjoint, 
 and do not overlap with any major arc.   Thus these intervals are all contained in the minor arcs, and therefore
 \begin{equation} 
 \label{newsec1}
 \int_{\frak m} |{\mathcal A}(\alpha) \tilde{\mathcal A}(\alpha)| d\alpha 
 \ge \sum_{KQ_0 < q\le R} \sum_{\substack{ a\pmod q\\ (a,q)=1}} \int_{-\frac{1}{2qR}}^{\frac{1}{2qR}} |{\mathcal A}(a/q+ \beta) \overline{\tilde{\mathcal A}(a/q+\beta)}| d\beta. 
 \end{equation} 
  Now we use Lemma \ref{major} to evaluate $\overline{\tilde{\mathcal A}(a/q+\beta)}$.  The remainder term arising from that Lemma contributes, using Cauchy--Schwarz and Parseval,  
  $$ 
  \ll BR \log N \Big(\sum_{KQ_0 < q\le R} \sum_{\substack{ a\pmod q\\ (a,q)=1}} \int_{-\frac{1}{2qR}}^{\frac{1}{2qR}}  d\beta \Big)^{\frac 12} \Big( \int_0^1 |{\mathcal A}(\alpha)|^2 d\alpha\Big)^{\frac 12} \ll BR(\log N) \Big( \sum_{n\le N} |a_n|^2 \Big)^{\frac 12}. 
  $$ 
  Since $|a_n| \ll_{\epsilon} N^{\epsilon}$ by assumption, this is $\ll_{\epsilon} BRN^{\frac 12+\epsilon}$.  
  
  Using the triangle inequality, the main term from Lemma \ref{major} contributes to the right side of \eqref{newsec1} an amount  
 \begin{equation} 
 \label{newsec2} 
 \ge N \sum_{KQ_0 < q\le R} \Big| \sum_{\substack{r\le R \\ q|r}} \frac{\overline{b_r}}{r}  \int_{-\frac{K}{2qR}}^{\frac{K}{2qR}} {\hat {\Phi}}(N\beta) \sum_{n\le N} a_n e(n\beta) \sum_{\substack{a\pmod q\\ (a,q) =1}} e(an/q) d\beta \Big|. 
 \end{equation}  
 Now note that 
 $$ 
 \int_{-\frac{1}{2qR}}^{\frac{1}{2qR}} {\hat \Phi}(N\beta) e(n\beta) d\beta = \frac{1}{N} \int_{-\frac{N}{2qR}}^{\frac{N}{2qR}} {\hat \Phi}(u) e(nu/N) du = 
 \frac{1}{N}\Big( \Phi\Big(\frac{n}{N}\Big) + O\Big( \min \Big( 1, \frac{qR}{N}\Big)\Big)\Big). 
 $$ 
 The main term above, when inserted in \eqref{newsec2} leads to the main term of our proposition.   The remainder term above contributes to \eqref{newsec2} an amount 
 $$ 
 \ll \sum_{KQ_0 < q\le R} \frac{B\log N}{q} \min\Big(1,\frac{qR}{N}\Big) \sum_{n\le N} |a_n c_q(n)| = \frac{BR\log N}{N} \sum_{KQ_0 < q\le R} \sum_{n\le N} |a_n| |(q,n)| \ll_{\epsilon} BR^2 N^{ \epsilon} ,
 $$ 
 and so Proposition \ref{newprop} follows.

\section{The case of primes: Proof of Theorem 1} 
    
\noindent We apply our previous work taking $a_n = \Lambda(n)$ for $n\le N$.   We shall  take $K= (\log N)^2$, and $Q_0 = N (\log N)^{10}/Q$, and we shall also assume that $\sqrt{N} (\log N)^{100} \le Q \le N$.  
Put also $\psi(N;\alpha) := \sum_{n\le N} \Lambda(n)e(n\alpha)$.

{\em Preparation.} Note that in this setting, with $\psi_q(N) =\sum_{n\le N, (n,q)=1} \Lambda(n)$, the variance \eqref{1.1} in progressions $\pmod q$ is 
$$ 
V(q; \mathcal{A}) = \sum_{\substack{a\pmod q \\ (a,q)=1}} \Big(\psi(N;q,a) - \frac{\psi_q(N)}{\phi(q)} \Big)^2 + O((\log N)^2).
$$
In other words, it makes little difference if we only keep the term $h=1$ in the outer sum in \eqref{1.1}.

{\em Applying Proposition \ref{Prop1.3}.} Since, for $d\le N$,  
$$ 
\sum_{n\le N} \Lambda(n)c_d(n) \leq \sum_{n \leq N} \Lambda(n) (d,n) \ll N + \sum_{p^{k} || d} p^{k} \log N \ll N\log N , 
$$ 
we find that 
\begin{align*}
\sum_{q\le Q} \frac{1}{q} \sum_{\substack{d|q \\ d>Q_0} } \frac{1}{\phi(d)} \Big|\sum_{n\le N} \Lambda(n) c_d(n) \Big|^2 &\ll  \sum_{q \le Q} \frac{1}{q} \sum_{\substack{ d|q \\ d>Q_0} } \frac{N^2 (\log N)^2}{\phi(d)} \ll 
\sum_{Q_0< d \le Q} \frac{N^2 (\log N)^2}{\phi(d)} \frac{\log N}{d} \\
&\ll \frac{N^2 (\log N)^3 }{Q_0}\ll QN. 
\end{align*} 
Using the simple estimate $\sum_{n\le N} \Lambda(n)^2 \ll N\log N$, and appealing to Proposition \ref{Prop1.3},  we obtain 
 \begin{align} 
 \label{5.1}
 \sum_{Q_0 < q\le Q} \sum_{ \substack{a\pmod q \\ (a,q)=1}} \Big(\psi(N;q,a) - \frac{\psi_q(N)}{\phi(q)} \Big)^2  
& \ge Q \int_{\frak m} |\psi(N;\alpha)|^2d\alpha  +O(NQ). 
 \end{align} 
  
{\em Applying Proposition \ref{prop major}.} To estimate the integral over the minor arcs, we use our work leading up to Proposition \ref{prop major}.   
 With $R= Q/(\log N)^{20}$, we take the usual sieve-type weights
 $$ 
 b_r = \begin{cases} 
 \mu(r) \log (R/r) &\text{if  } r\le R \\ 
 0 &\text{if  } r>R. 
 \end{cases}
 $$ 
 Note that, in the notation of \eqref{1.12}, we have $B = \log R$.  
  Set 
  $$ 
  \tilde{\Lambda}(n) := \sum_{\substack{ r|n \\ r\le R}} b_r \Phi\Big(\frac{n}{N}\Big), \qquad \text{and } \qquad \tilde{\psi}(N;\alpha) := \sum_{n\le N} \tilde{\Lambda}(n) e(n\alpha).
  $$ 
  Note that, using the prime number theorem and summation/integration by parts, 
  \begin{align} 
  \label{5.2} 
  \sum_{n\le N} \Lambda(n) \tilde{\Lambda}(n)& =\sum_{p\le N} (\log p) (\log R) \Phi\Big(\frac{p}{N} \Big) + O(R\log R) + O(N^{\frac 12+\epsilon}) 
\nonumber\\
&  = N \log R \int_0^1 \Phi(t) dt + O(N). 
  \end{align}  
  Further, by the main Theorem of Graham \cite{Graham} and partial summation, we get 
  \begin{equation} 
  \label{5.3} 
  \sum_{n\le N} \tilde{\Lambda}(n)^2 = \sum_{n \leq N} \Big(\sum_{\substack{ r|n \\ r\le R}} \mu(r) \log(R/r) \Big)^2 \Phi\Big(\frac{n}{N} \Big)^2 = N \log R \int_0^1 \Phi(t)^2 dt + O(N) .
 \end{equation} 
We will also use the following asymptotic  (valid for $q\le R$, for any $\delta>0$, and with some $c>0$):
\begin{equation} 
\label{5.4} 
\sum_{\substack{r \le R \\ q|r}} \frac{b_r}{r} = \sum_{\substack{ r\le R \\ q|r }} \frac{\mu(r)}{r} \log \frac{R}{r} = \frac{\mu(q)}{\phi(q)} + O\Big(\frac{1}{q} \exp(-c\sqrt{\log R/q}) \prod_{p|q} \Big(1+O\Big(\frac{1}{p^{1-\delta}}\Big)\Big)\Big). 
\end{equation} 
This follows by a standard argument, writing (we may clearly assume that $q$ is square-free) 
\begin{align*}
\sum_{\substack {r\le R \\ q|r} } \frac{\mu(r)}{r} \log \frac{R}{r} &= \frac{\mu(q)}{q} \sum_{\substack{r \leq R/q\\ (r,q)=1}} \frac{\mu(r)}{r} \log(R/rq)\\
& = \frac{\mu(q)}{q} \frac{1}{2\pi i} \int_{c-i\infty}^{c+i\infty} \frac{1}{\zeta(s+1)} \prod_{p|q} \Big(1-\frac{1}{p^{s+1}}\Big)^{-1} \Big(\frac{R}{q}\Big)^s \frac{ds}{s^2}, 
\end{align*} 
and then shifting contours appropriately, staying within the classical zero-free region for $\zeta(s)$.  

We wish to evaluate the sum in the first part of Proposition \ref{prop major}; suppose that $q \leq KQ_0 \leq \sqrt{N}$, 
as in Proposition \ref{prop major}. Using 
$$ 
\int_{-\frac{K}{qQ}}^{\frac{K}{qQ}} {\hat \Phi}(\beta N) e(n\beta) d\beta = \frac{1}{N} \int_{-\frac{NK}{qQ}}^{\frac{NK}{qQ}} {\hat \Phi}(u) e(n u/N) du = \frac{1}{N} \Big(\Phi \Big(\frac nN\Big) + O \Big ( \min\Big(1,\frac{qQ}{KN}\Big)\Big)\Big), 
$$ 
and that 
\begin{align*}
\sum_{n\le N } \Lambda(n) c_q(n)  \Big(\Phi \Big(\frac nN\Big) &+ O \Big ( \min\Big(1,\frac{qQ}{KN}\Big)\Big)\Big)\\
& =  \mu(q) \sum_{n\le N } \Lambda(n) \Big(\Phi \Big(\frac nN\Big) + O \Big ( \min\Big(1,\frac{qQ}{KN}\Big)\Big)\Big) + O\Big(\sum_{p^{k} || q} p^{k} \log N\Big) \\
& =  \mu(q) N \int_0^1 \Phi(t) dt + O\Big(\frac{N}{(\log N)^{10}} + \min \Big( N, \frac{qQ}{K}\Big)\Big),
\end{align*}
we conclude that 
$$ 
\int_{-\frac{K}{qQ}}^{\frac{K}{qQ}} \sum_{n\le N} \Lambda(n) c_q(n)e(n\beta) {\hat \Phi}(N\beta) d\beta = \mu(q)  \int_0^1 \Phi(t) dt + O\Big(\frac{1}{(\log N)^{10}} + \min \Big(1, \frac{qQ}{KN}\Big)\Big). 
$$ 
Using this together with Proposition \ref{prop major} and \eqref{5.4} we obtain, with a small calculation,  
\begin{align*}
\int_{\frak M} \psi(N;\alpha)\tilde{\psi}(N;-\alpha) d\alpha &= N \sum_{q\le KQ_0} \frac{\mu(q)^2}{\phi(q)} \int_0^1 \Phi(t) dt  + O\Big(N+N \sum_{q \leq KQ_0} \frac{1}{\phi(q)} \min\Big(1,\frac{qQ}{KN}\Big) \Big) \\
&=N \sum_{q\le KQ_0} \frac{\mu(q)^2}{\phi(q)} \int_0^1 \Phi(t) dt  + O(N\log \log N)\\
&= N\Big( \int_0^1 \Phi(t) dt \Big) \Big( \log (KQ_0) + O(\log \log N)\Big). 
\end{align*}
Taking the difference between this and \eqref{5.2} we conclude that 
\begin{equation} 
\label{5.5} 
\Big|\int_{\frak m} \psi(N;\alpha) \tilde{\psi}(N;-\alpha) d\alpha \Big| = N \Big( \int_0^1 \Phi(t) dt \Big) \Big( \log \frac{R}{KQ_0} + O(\log \log N)\Big). 
\end{equation} 

Using the second part of Proposition \ref{prop major} and \eqref{5.4}, we similarly get that 
$$ 
\int_{\frak M} |{\tilde \psi}(N;\alpha)|^2 d\alpha = N \Big( \int_0^1 \Phi(t)^2 dt \Big) \log (KQ_0) + O(N\log \log N).
$$  
Thus, using \eqref{5.3}, we conclude that 
 \begin{equation} 
\label{5.6} 
\int_{\frak m} |\tilde{\psi}(N;\alpha)|^2d\alpha = N \Big( \int_0^1 \Phi(t)^2 dt \Big) \Big( \log \frac{R}{KQ_0} + O(\log \log N)\Big). 
\end{equation}

{\em Conclusion.} Combining \eqref{5.5} and \eqref{5.6} with Cauchy--Schwarz (as in \eqref{1.7}), and recalling our choice of $\Phi(t)$ as a smooth approximation from below to the indicator function of $[0,1]$, we obtain 
\begin{equation*} 
\label{5.9} 
\int_{\frak m} |\psi(N;\alpha)|^2 d\alpha \ge N (1 +O(\epsilon))  \Big( \log \frac{R}{KQ_0} + O(\log \log N)\Big), 
\end{equation*} 
which when used with \eqref{5.1} (and the choices of $R, K, Q_0$) yields the theorem. 
 
 \section{Estimates for divisor sums} 
 
\noindent In this section we collect together various estimates for averages of divisor functions, which we will need for our proof of Theorem 2.  Since the proofs of these facts are largely routine applications of contour integration, we will content ourselves with  sketching the proofs  quickly.

\begin{proposition} \label{Prop3} Given a natural number $q$, define 
$$ 
F_q(s) := \prod_{\substack{p^{a} \Vert q \\ a\ge 1 }} \Big(1-\frac 1{p^s}\Big)^k  
\Big( - \frac{d_k(p^{a-1})}{p^{(a-1)(s-1)}} + \phi(p^a) \sum_{b\ge a} \frac{d_k(p^b)}{p^{bs} }\Big). 
$$ 
Then $F_q(s)$ converges absolutely for Re$(s)>0$, and in the region Re$(s)>1$ we have 
$$ 
\sum_{n=1}^{\infty} \frac{d_k(n) c_q(n) }{n^s} = \zeta(s)^k F_q(s). 
$$ 
Uniformly for $q\le N$ we have 
$$ 
\sum_{n\le N} d_k(n) c_q(n)  = \mathop{\text{Res}}_{s=1} \Big( \zeta(s)^k F_q(s) \frac{N^s}{s} \Big) + O_{k,\epsilon}\Big( N^{1+\epsilon} \Big(\frac qN\Big)^{\frac{2}{k+2}}\Big). 
$$ 
\end{proposition}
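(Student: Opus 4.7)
The plan is to prove the Dirichlet series identity directly from an Euler-product computation, and then deduce the asymptotic by a truncated Perron formula combined with a contour shift, balancing the height $T$ and the abscissa of the new contour to achieve the claimed error.

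For the identity, I would split each $n$ as $n = n_1 n_2$ where $n_1$ is supported on primes dividing $q$ and $(n_2,q)=1$. Since $(q,n) = (q,n_1)$ we have $c_q(n) = c_q(n_1)$, and by multiplicativity of $d_k$ we may factor
\[
\sum_{n=1}^{\infty} \frac{d_k(n) c_q(n)}{n^s} = \Bigg(\sum_{(n_2,q)=1} \frac{d_k(n_2)}{n_2^s}\Bigg) \prod_{p^a\Vert q}\Bigg(\sum_{b\ge 0} \frac{d_k(p^b) c_{p^a}(p^b)}{p^{bs}}\Bigg).
\]
The first bracket is $\zeta(s)^k \prod_{p|q}(1-p^{-s})^k$. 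For the local factors, I would use $c_{p^a}(p^b) = -p^{a-1}$ if $b=a-1$, $=\phi(p^a)$ if $b\ge a$, and $=0$ otherwise; substituting this directly produces the bracket in the definition of $F_q(s)$. Convergence of $F_q(s)$ for $\mathrm{Re}(s)>0$ is automatic, since it is a finite product over $p|q$ whose factors are polynomials plus tails $\sum_{b\ge a} d_k(p^b) p^{-bs}$ that converge absolutely because $d_k(p^b)$ grows only polynomially in $b$.

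For the asymptotic, I would apply the truncated Perron formula at height $T$ on the line $\mathrm{Re}(s) = 1 + 1/\log N$, and then shift the contour to $\mathrm{Re}(s) = \epsilon$, collecting the residue at $s=1$ (which yields the stated main term). The Perron truncation error is $\ll N^{1+\epsilon}/T$ in view of the easy bound $\sum_n d_k(n)|c_q(n)|/n^{1+1/\log N} \ll_\epsilon (\log N)^{k} q^{\epsilon}$. On the shifted contour I would use the convexity bound $|\zeta(\sigma+it)| \ll (1+|t|)^{(1-\sigma)/2+\epsilon}$ together with a bound $|F_q(\sigma+it)| \ll_\epsilon q^{1-\sigma+\epsilon}$ (discussed below). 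The vertical integral is then
\[
\ll N^{\epsilon} q^{1+\epsilon} T^{k/2+\epsilon},
\]
while the horizontal pieces are smaller by a factor $T^{-1}$. Balancing $N^{1+\epsilon}/T$ against $N^{\epsilon} q\, T^{k/2}$ by choosing $T^{k/2+1} \asymp N/q$ yields total error $\ll N^{1+\epsilon}(q/N)^{2/(k+2)}$, as claimed.

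The main obstacle is the uniform bound $|F_q(\sigma+it)| \ll_\epsilon q^{1-\sigma+\epsilon}$ for small $\sigma$. Estimating each local factor termwise gives
\[
|(1-p^{-s})^k|\cdot\big|\cdots\big| \ll 2^{k} \Big( d_k(p^{a-1}) p^{(a-1)(1-\sigma)} + p^{a(1-\sigma)} \sum_{c\ge 0} d_k(p^{a+c})\, p^{-c\sigma}\Big),
\]
which for $\sigma \ge \epsilon$ is $\ll_\epsilon p^{a(1-\sigma)} d_k(p^a)^{O(1)}$. Multiplying over $p\mid q$ and invoking $2^{\omega(q)}, d_k(q) \ll q^{\epsilon}$ delivers the claim. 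Once this is established, the Perron and contour-shift argument is entirely routine.
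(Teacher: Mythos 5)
Your proposal follows essentially the same route as the paper: Euler products for the Dirichlet-series identity, truncated Perron, a contour shift to $\mathrm{Re}(s)=\epsilon$ using the convexity bound for $\zeta$ and the bound $|F_q(\sigma+it)|\ll_\epsilon q^{1-\sigma+\epsilon}$, then balancing at $T=(N/q)^{2/(k+2)}$. One caution: your justification of the Perron truncation error only addresses the ``far from $N$'' contribution, whereas the paper records an additional $O(qN^\epsilon)$ term coming from $n$ near $N$; since $|c_q(n)|$ can be as large as $q$ for such $n$, a naive bound of those near-diagonal terms gives $qN^{1+\epsilon}/T$, which is a factor $q$ too large at your chosen $T$, so one should use $|c_q(n)|\le(q,n)$ and split according to the value of $(q,n)$ to recover the acceptable $O(qN^\epsilon + N^{1+\epsilon}/T)$.
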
 

\begin{proof}  The first assertion follows upon using  \eqref{1.4} and computing Euler products.   The second assertion follows by a standard contour shift argument, starting with a quantitative Perron formula 
$$ 
\sum_{n\le N} d_k(n) c_q(n) = \frac{1}{2\pi i} \int_{1+1/\log N -iT}^{1+1/\log N +iT} \zeta(s)^k F_q(s) N^{s} \frac{ds}{s} + O_{k,\epsilon}\Big(qN^{\epsilon} + \frac{N^{1+\epsilon}}{T}\Big), 
$$ 
and then moving the line of integration to the line segment from $\epsilon - iT$ to $\epsilon +iT$.  The pole at $s=1$ gives the stated main term.   Using the 
convexity bound $|\zeta(s)|^k \ll (1+|s|)^{k(1-\sigma)/2+\epsilon}$ and the easy bound $|F_q(s)| \ll q^{(1-\sigma)+\epsilon}$, we can bound the other integrals producing an additional error term $O_{k,\epsilon}(N^{\epsilon} qT^{k/2})$.  Finally choosing $T= (N/q)^{\frac 2{k+2}}$, the proposition follows.  
\end{proof}

 \begin{proposition} \label{Prop4}  Given a natural number $q$, and a natural number $k$, define 
 $$ 
 G_{q}(s) := \prod_{p^a \Vert q} \Big(1-\frac{1}{p^s}\Big)^{k-1} \sum_{b=a}^{\infty} \frac{d_{k-1}(p^b)}{p^{bs} } = \frac{1}{q^{s}} \prod_{p^a \Vert q} \Big(1-\frac{1}{p^s}\Big)^{k-1} \sum_{b=a}^{\infty} \frac{d_{k-1}(p^b)}{p^{(b-a)s} } . 
 $$ 
Then $G_{q}(s)$ converges absolutely in the region Re$(s)>0$, and in the region Re$(s)>1$ we have 
$$ 
\sum_{q|n} \frac{d_{k-1}(n)}{n^s} = \zeta(s)^{k-1} G_{q}(s). 
$$ 
Uniformly for $q\le x$ we have 
$$ 
\sum_{\substack{n\le x\\ q|n } } \frac{ d_{k-1}(n)}{n} = \mathop{\text{Res}}_{s=0} \Big( \zeta(s+1)^{k-1} G_{q}(s+1) \frac{x^s}{s}\Big) + O_{k,\epsilon}\Big( \frac{x^{\epsilon}}{q}  \Big(\frac{q}{x}\Big)^{\frac{2}{k+1}}\Big). 
$$ 
The error term above may depend on $k$ and $\epsilon$, but is uniform in $q$.  
\end{proposition}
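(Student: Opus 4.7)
\emph{Proof plan.} First I would verify the Dirichlet series identity for $\mathrm{Re}(s) > 1$: splitting the restriction $q \mid n$ multiplicatively, the primes $p \nmid q$ each contribute $(1 - 1/p^s)^{-(k-1)}$ and together build up $\zeta(s)^{k-1}$, while the primes $p^a \Vert q$ contribute $\sum_{b \ge a} d_{k-1}(p^b)/p^{bs}$; factoring out $\zeta(s)^{k-1}$ leaves exactly $G_q(s)$. To establish absolute convergence in $\mathrm{Re}(s) > 0$, I would observe that the local factor of $G_q(s)$ at $p^a \Vert q$ can be written as $p^{-as} Q_a(1/p^s)$, where $Q_a(x) := (1-x)^{k-1} \sum_{c \ge 0} \binom{c+a+k-2}{k-2} x^c$ is a polynomial of degree at most $k-2$ (with coefficients polynomial in $a$), since the sum is the generating function of a polynomial in $c$ of degree $k-2$. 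Multiplying over $p \mid q$ gives the closed form $G_q(s) = q^{-s} \prod_{p^a \Vert q} Q_a(1/p^s)$, from which the uniform bound $|G_q(s)| \ll_{k,\epsilon} q^{-\sigma+\epsilon}$ on $\mathrm{Re}(s) = \sigma > 0$ follows (using $Q_a(x) \ll_k a^{k-2}$ for $|x| \le 1$ and the elementary estimate $\prod_{p^a \Vert q} a^{k-2} \ll_{k,\epsilon} q^\epsilon$).

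Next I would apply a quantitative Perron formula to the coefficients $a_n = \mathbf{1}_{q \mid n} d_{k-1}(n)/n$, whose generating Dirichlet series is $\zeta(s+1)^{k-1} G_q(s+1)$ for $\mathrm{Re}(s) > 0$. With $c = 1/\log x$ and a truncation parameter $T$ to be chosen, this produces
\[ \sum_{\substack{n \le x \\ q \mid n}} \frac{d_{k-1}(n)}{n} = \frac{1}{2\pi i} \int_{c - iT}^{c + iT} \zeta(s+1)^{k-1} G_q(s+1) \frac{x^s}{s}\, ds + R(x,T), \]
where the standard Perron remainder $R(x,T)$ is $\ll_{k,\epsilon} x^\epsilon / (qT)$, as one checks using $\sum_{n \le x,\, q \mid n} d_{k-1}(n)/n \ll_{k,\epsilon} x^\epsilon / q$. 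I would then shift the line of integration to $\mathrm{Re}(s) = -1 + \delta$ for a small $\delta > 0$, crossing the unique pole at $s = 0$ coming from the pole of $\zeta(s+1)$ there, whose residue is exactly the claimed main term.

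Finally I would estimate the three new contour segments: the vertical segment at $\mathrm{Re}(s) = -1 + \delta$ and the two horizontal segments at height $\pm T$. The convexity bound $|\zeta(1 + u + it)|^{k-1} \ll (1 + |t|)^{-(k-1)u/2 + \epsilon}$ for $-1 < u \le 0$, combined with the first-step estimate $|G_q(1 + u + it)| \ll_{k,\epsilon} q^{-(1+u) + \epsilon}$, gives a pointwise bound on the integrand (divided by $|s|$) of shape $(x/q)^u q^{-1+\epsilon} T^{-(k-1)u/2 + \epsilon}$. Letting $\delta \to 0$ and choosing $T = (x/q)^{2/(k+1)}$ makes both the vertical and horizontal contributions $\ll_{k,\epsilon} x^\epsilon q^{-1} (q/x)^{2/(k+1)}$, and balances the Perron remainder $R(x,T)$ against them, delivering the stated bound. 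The main obstacle is ensuring all estimates are uniform in $q$ up to $x$; in particular the contour cannot be shifted beyond $\mathrm{Re}(s) = -1$, which is precisely where the uniform polynomial bound on $G_q(s+1)$ breaks down, and this is what fixes the exponent $2/(k+1)$ in the error term.
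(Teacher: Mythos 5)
Your proof is correct and takes essentially the same route as the paper, which for this proposition only says that it ``is proved similarly to Proposition~\ref{Prop3}, by comparing Euler products to establish the stated identity, and then shifting contours.'' You have filled in the details of that sketch faithfully: the Euler product comparison, the uniform polynomial bound $|G_q(\sigma+it)| \ll_{k,\epsilon} q^{-\sigma+\epsilon}$ for $\sigma > 0$ (obtained via the closed form $G_q(s) = q^{-s}\prod_{p^a \| q} Q_a(p^{-s})$ with $\deg Q_a \le k-2$), a truncated Perron formula, a contour shift from $\mathrm{Re}(s) = 1/\log x$ to $\mathrm{Re}(s) = -1+\delta$ picking up the order-$k$ pole at $s=0$, the convexity bound for $\zeta$, and the choice $T = (x/q)^{2/(k+1)}$ balancing the Perron and contour errors — exactly mirroring how the paper handles Proposition~\ref{Prop3} with $T = (N/q)^{2/(k+2)}$.
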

\begin{proof}  This is proved similarly to Proposition \ref{Prop3}, by comparing Euler products to establish the 
stated identity, and then shifting contours.   
\end{proof} 

 \section{The case of divisor functions: proof of Theorem 2} 
    
\noindent    
Throughout we take $K=(\log N)^{10}$, and $N^{\frac 12+\delta} \le Q \le N$, and $Q_0 = N^{1+\epsilon}/Q$ for some small $\epsilon > 0$ depending on $\delta$.  
Put also $D_k(N;\alpha) := \sum_{n\le N } d_k( n) e(n\alpha)$.  

{\em Applying Proposition \ref{Prop1.3}.} Proposition \ref{Prop3} may be used to show that $\sum_{n\le N} d_k(n) c_d(n) \ll N^{1+\epsilon}$ for all $d\le N$. (See Lemma \ref{ramdkeval} below for an example calculation of the residue in Proposition \ref{Prop3}.)
Hence 
$$ 
\sum_{q\le Q} \frac 1q \sum_{\substack{d|q \\ d>Q_0}} \frac 1{\phi(d)} \Big| \sum_{n\le N} d_k(n) c_d(n) \Big|^2 
\ll N^{2+\epsilon} \sum_{Q_0 < d \leq Q} \frac 1{\phi(d)} \sum_{\substack{ Q_0 <q \le Q \\ d|q} } \frac 1q \ll \frac{N^{2+\epsilon}}{Q_0} \le QN ,
$$ 
 and applying Proposition \ref{Prop1.3},  we get 
\begin{equation} 
\label{6.1} 
\sum_{Q_0 < q\le Q} V_k(q) \ge Q (1+o(1)) \int_{\frak m} |D_k(N;\alpha)|^2 d\alpha + O(QN).
\end{equation}

 
{\em Preparations for Proposition \ref{prop major}.} Now let $R$ be a parameter with $Q_0 N^{\epsilon} \le R \le QN^{-\epsilon}$, and take $b_r = d_{k-1}(r)$ for $r\le R$ and $b_r=0$ for $r>R$.  Set 
 $$ 
 {\tilde d}_k(n) := \sum_{\substack{r|n \\ r\le R} } d_{k-1}(r) \Phi\Big(\frac nN\Big), \qquad \text{and} 
 \qquad 
 {\tilde D}_k(N;\alpha) := \sum_{n \leq N} {\tilde d}_k(n) e(n\alpha). 
 $$ 
 Note that 
 \begin{equation} 
 \label{6.3} 
 \int_{\frak m} |{\tilde D}_k(N;\alpha)|^2 d\alpha \le \int_0^1 |{\tilde D}_k(N;\alpha)|^2  = \sum_{n} {\tilde d}_k(n)^2 \le 
 \sum_{n\le N} d_k(n)^2 \sim c_k N (\log N)^{k^2 -1},  
 \end{equation}  
 where the final asymptotic is a routine calculation, and 
 $$ 
 c_k = \frac{1}{(k^2 -1)!} \prod_p \Big(1-\frac 1p\Big)^{k^2} \Big( \sum_{a=0}^{\infty} \frac{d_k(p^a)^2}{p^{a}}\Big). 
 $$ 
 
We now give two ways to finish the proof of Theorem \ref{kdiv}.  Our first approach, carried out in Section 8.1, establishes 
that for some choice of $R$ in $[Q_0N^{\epsilon},QN^{-\epsilon}]$ we have 
 \begin{equation} 
 \label{6.4} 
 \Big| \int_{\frak m} D_k(N;\alpha) \overline{{\tilde D}_k(N;\alpha)} d\alpha 
 \Big | \gg_{k,\delta} N (\log N)^{k^2 -1}, 
 \end{equation} 
 so that  combining \eqref{6.1} with \eqref{6.3} and \eqref{6.4} (together with Cauchy--Schwarz as in \eqref{1.7}) we would  deduce the theorem. The second approach, carried out in Section 8.2, establishes that with $R = N^{\frac 12-\frac{\delta}{2}}$ one has 
\begin{equation} 
 \label{absvalueseasier} 
 \int_{\frak m} |D_k(N;\alpha) {\tilde D}_k(N;\alpha)| d\alpha 
\gg_{k,\delta} N (\log N)^{k^2 -1},
 \end{equation}
 which again by Cauchy--Schwarz (as in \eqref{1.13}) is enough to deduce the theorem. 
 
\subsection{Proof of Theorem 2: the first ending}
 We begin our proof of \eqref{6.4} by noting that, using Proposition 
 \ref{prop major} 
 \begin{align} 
 \label{6.5} 
 \int_{\frak M} D_k(N;\alpha) \overline{{\tilde D}_k(N;\alpha)} d\alpha & = N \sum_{q\le KQ_0} \Big(\sum_{\substack {r\le R\\ q|r}} \frac{d_{k-1}(r)}{r}\Big) \sum_{n\le N} d_k(n)c_q(n) \int_{-\frac{K}{qQ}}^{\frac K{qQ}} e(n\beta) {\hat \Phi}(\beta N) d\beta \nonumber\\
 &\hskip 1 in  + O(N^{1+\epsilon}R/Q).
 \end{align} 
 
 \begin{lemma} \label{lem7.1}  We have 
 \begin{align*}
 \sum_{q\le KQ_0}  \Big(\sum_{\substack {r\le R\\ q|r}} &\frac{d_{k-1}(r)}{r}\Big) \sum_{n\le N} d_k(n)c_q(n) \int_{-\frac{K}{qQ}}^{\frac K{qQ}} e(n\beta) {\hat \Phi}(\beta N) d\beta \\
 &= 
\frac{1}{N} \sum_{q\le KQ_0}  \Big(\sum_{\substack {r\le R\\ q|r}} \frac{d_{k-1}(r)}{r}\Big) \sum_{n\le N} d_k(n)c_q(n) \Phi\Big(\frac nN\Big) +  O\Big((\log N)^{k^2-2} \log \frac{Q_0Q}{N}\Big). 
 \end{align*}
 \end{lemma}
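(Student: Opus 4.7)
The plan is to apply Fourier inversion to the inner $\beta$-integral. By Fourier inversion and the rapid decay \eqref{1.10}, we have
$$
\int_{-K/(qQ)}^{K/(qQ)} e(n\beta)\, \hat\Phi(N\beta)\, d\beta \;=\; \frac{1}{N} \Phi\Big(\frac{n}{N}\Big) \;-\; \frac{1}{N}\int_{|u| > NK/(qQ)} \hat\Phi(u)\, e(nu/N)\, du,
$$
and the tail integral is $O(\min(1, (qQ/(NK))^{A-1}))$ uniformly in $n$, for any fixed $A \geq 2$. Substituting this identity into the LHS of Lemma \ref{lem7.1} reproduces the main term on the RHS, leaving an error that I must show is $O((\log N)^{k^2-2} \log(Q_0 Q/N))$.

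Interchanging summation and integration, and using $\sum_n d_k(n) c_q(n) e(n\beta) = \sum_{(a,q)=1} D_k(N; \beta + a/q)$, the error takes the form
$$
\mathcal{E} \;=\; -\sum_{q\leq KQ_0} \Big(\sum_{\substack{r\leq R \\ q|r}} \frac{d_{k-1}(r)}{r}\Big) \int_{|\beta|>K/(qQ)} \hat\Phi(N\beta) \sum_{(a,q)=1} D_k(N; \beta + a/q)\, d\beta.
$$
For $q \leq NK/Q$, the rapid decay of $\hat\Phi$, for $A$ large, renders this range negligible. For $q$ in the main range $(NK/Q, KQ_0]$, I would combine three ingredients: (i) the bound $\sum_{r\leq R,\, q|r} d_{k-1}(r)/r \ll d_{k-1}(q) (\log R)^{k-1}/q$, obtained from Proposition \ref{Prop4} applied with $x = R$; (ii) an evaluation of the weighted inner $n$-sum via Proposition \ref{Prop3}, yielding a main term of size $N$ times a polynomial in $\log N$ of degree $k-1$ whose leading coefficient is governed by $F_q(1)$; and (iii) a Selberg--Delange-type estimate for the relevant average of divisor products, which behaves like $(\log X)^{k^2-1}$ since the local Dirichlet series factors at each prime resemble $\zeta(s)^{k^2-1}$.

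The main obstacle is extracting the precise logarithmic exponent $k^2-2$: a crude triangle-inequality bound using $|c_q(n)|$ in place of $c_q(n)$ would lose several powers of $\log N$, so one must exploit the cancellation inherent in the Ramanujan sums. This cancellation is morally equivalent to the identity $\int (\Phi - \Phi_T)(t)\, dt = 0$ (where $\Phi_T$ denotes the partial Fourier inverse truncated at $|u|\leq T$), which forces the leading-order terms in the Perron expansions of the two sides of the lemma to cancel, so that the error is controlled by sub-leading contributions. Finally, the restriction of the $q$-sum to the dyadic range $(NK/Q, KQ_0]$ contributes a factor of $\log(Q_0 Q/N)/\log(KQ_0)$ via the mean value theorem applied to $x \mapsto x^{k^2-1}$, producing the claimed bound.
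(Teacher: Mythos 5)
Your reduction via Fourier inversion to the tail integral $\int_{|\beta|>K/(qQ)}$ is exactly the paper's first step, and the three ingredients you list in (i)--(iii) are also the ingredients the paper uses. However, there are two genuine issues in the way you propose to execute the tail estimate.

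First, Proposition \ref{Prop3} evaluates $\sum_{n\le N} d_k(n) c_q(n)$ with no exponential twist, so you cannot invoke it directly on $\sum_n d_k(n) c_q(n) e(n\beta)$; the paper passes from the former to the latter by partial summation, which introduces the extra factor $(1+N|\beta|)$ in the resulting bound $\sum_{n\le N} d_k(n) c_q(n) e(n\beta) \ll (1+N|\beta|)\, d_{k-1}(q) \prod_{p|q}(1+O_k(1/p))\, N(\log N)^{k-1}$. Your proposal never accounts for this factor, which must then be absorbed by the rapid decay of $\hat\Phi$ when you integrate over $\beta$.

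Second, and more seriously, the ``$\int(\Phi-\Phi_T)\,dt=0$'' discussion is a red herring and does not hold in the form you need. That identity is valid for the integral over all of $\mathbb{R}$, but the sum in the lemma runs over $n\le N$, so the object that would enter a Perron/Mellin argument is $\int_0^1 (\Phi-\Phi_T)(y)\,y^{s-1}\,dy$, and $\int_0^1(\Phi-\Phi_T)(y)\,dy$ does not vanish (since $\Phi_T$ carries mass outside $[0,1]$). In fact no such leading-order cancellation is used in the paper: the exponent $k^2-2$ arises purely from combining the $(1+N|\beta|)$-twisted bound above, the estimate $\sum_{r\le R,\, q|r} d_{k-1}(r)/r \ll d_{k-1}(q)(\log R)^{k-1}/q$, the integral estimate $\int_{|\beta|>K/(qQ)}(1+N|\beta|)|\hat\Phi(\beta N)|\,d\beta \ll N^{-1}\min(1, qQ/(KN))$, and finally the Mertens-type sum $\sum_{q\le KQ_0}\frac{d_{k-1}(q)^2}{q}\prod_{p|q}(1+O(1/p))\min(1,qQ/(KN)) \ll (\log N)^{(k-1)^2-1}\log(Q_0Q/N)$. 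Your final sentence about the mean value theorem is essentially this last step (though the relevant power is $(k-1)^2$, not $k^2-1$, before multiplying in the other logarithms), so your bookkeeping ends up correct; but the mechanism is the $\min(1,qQ/(KN))$ weight from $\hat\Phi$, not a Mellin zero coming from $\Phi-\Phi_T$.
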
 
 \begin{proof}  Since $\int_{-\infty}^{\infty}e(n\beta) {\hat \Phi}(\beta N) d\beta = N^{-1}\Phi(n/N)$, it is sufficient to estimate 
 \begin{equation} 
 \label{7.1.1} 
 \sum_{q\le KQ_0}  \Big(\sum_{\substack {r\le R\\ q|r}} \frac{d_{k-1}(r)}{r}\Big) \sum_{n\le N} d_k(n)c_q(n) 
 \int_{|\beta|> \frac{K}{qQ}} e(n\beta) {\hat \Phi}(\beta N) d\beta .
 \end{equation} 
 Now, by partial summation and using Proposition \ref{Prop3} we may see (recalling here that $q \leq KQ_0 \leq \sqrt{N}$ and $k \geq 2$, so $d_{k-1}(q) \geq d_{1}(q) = 1$) that 
 $$ 
 \sum_{n\le N} d_{k}(n) c_q(n) e(n\beta) \ll (1+N|\beta|) d_{k-1}(q) \prod_{p|q} \Big(1+O_{k}\Big(\frac 1p\Big)\Big) N(\log N)^{k-1}. 
 $$ 
 Using this, and the straightforward estimates 
 $$
 \sum_{\substack {r\le R\\ q|r}} \frac{d_{k-1}(r)}{r} \leq \frac{d_{k-1}(q)}{q} \sum_{s \leq R/q} \frac{d_{k-1}(s)}{s} \ll_{k} \frac{d_{k-1}(q)}{q} (\log R)^{k-1},
 $$
  the quantity in  \eqref{7.1.1} may be bounded by  
 $$ 
 \ll N(\log N)^{k-1} \sum_{q\le KQ_0} \frac{d_{k-1}(q)^2}{q} \prod_{p|q} \Big(1+O_{k}\Big(\frac 1p\Big)\Big) (\log R)^{k-1} \int_{|\beta|> \frac{K}{qQ}} (1+|\beta|N) |{\hat \Phi}(\beta N)| d\beta,
 $$
 which is 
\begin{align*} 
& \ll (\log N)^{2k-2} \sum_{q\le KQ_0} \frac{d_{k-1}(q)^2}{q} \prod_{p|q} \Big(1+O_{k}\Big(\frac 1p\Big)\Big)  \min\Big(1,\frac{qQ}{KN}\Big)\\
&  \ll_{k} (\log N)^{2k-2} (\log N)^{(k-1)^2- 1} \log \frac{Q_0Q}{N}.
\end{align*}
 \end{proof}

Now, as in \eqref{1.8},  
 \begin{equation*} 
 \int_{\frak m} D_k(N;\alpha) \overline{{\tilde D}_k(N;\alpha)} d\alpha = \sum_n d_k(n) {\tilde d}_k(n) - \int_{\frak M} D_k(N;\alpha) \overline{{\tilde D}_k(N;\alpha)} d\alpha , 
 \end{equation*} 
 and combining this with Lemma \ref{lem5} and Lemma \ref{lem7.1} and \eqref{6.5}, we obtain (up to an error term of $O(N(\log N)^{k^2-2} \log(Q_0 Q/N))$) 
 \begin{equation}
 \label{6.6} 
  \int_{\frak m} D_k(N;\alpha) \overline{{\tilde D}_k(N;\alpha)} d\alpha  \sim  \sum_{ KQ_0<q \le R}  \Big(\sum_{\substack {r\le R\\ q|r}} \frac{d_{k-1}(r)}{r}\Big) \sum_{n\le N} d_k(n)c_q(n) \Phi\Big(\frac nN\Big) .
  \end{equation} 
Notice here that $\log(Q_0 Q/N) = \epsilon \log N$, so the error term will be small compared with $N (\log N)^{k^2-1}$ provided we ultimately choose $\epsilon$ small enough. We may now use our work in Propositions \ref{Prop3} and \ref{Prop4} to evaluate the sums over $n$ and $r$ above.  Thus, 
 with an error term of at most $o(N(\log N)^{k^2-1})$, the right hand side of \eqref{6.6} equals 
 \begin{equation} 
 \label{6.7} 
 \sum_{KQ_0 <q\le R} \Big( \mathop{\text{Res}}_{s=0} \zeta(s+1)^{k-1} G_{q}(s+1) \frac{R^s}{s}\Big) \Big( \mathop{\text{Res}}_{w=1} \zeta(w)^k F_q(w) \Big(\int_0^1 \Phi(y)y^{w-1} dy \Big) N^w 
 \Big). 
 \end{equation} 
 
 In \eqref{6.7}, we think of the residues over $s$ and $w$ as contour integrals over circles centered around $s=0$ and $w=1$ with 
 radius $1/\log N$.  With this range for $s$ and $w$ in mind, we consider the sum over $q$, which we may itself write as a contour integral as (assuming $R, KQ_0$ are not integers)
 $$ 
 \sum_{KQ_0<q\le R} F_q(w)G_{q}(s+1) = \frac{1}{2\pi i }\int_{(c)} \sum_{q=1}^{\infty} \frac{F_q(w)G_{q}(s+1)}{q^z} \frac{R^z-(KQ_0)^z}{z} dz, 
 $$
  where the integral is taken over the line Re$(z)=c$ with $c=10/\log N$, say.  Now $F_q(w) G_{q}(s+1)$ is a multiplicative function of $q$, and a little calculation shows that in the $p$-factor in the corresponding Euler product, the leading terms are $1 - \frac{(k-1)}{p^{z+s+1}} + \frac{k(k-1)}{p^{z+s+w}}$ (the next terms all involving a larger multiple of $z+s+w$, of $s+1$, or of $w$ in the exponent of $p$). So it turns out that we may write
  $$ 
  \sum_{q=1}^{\infty} \frac{F_q(w)G_{q}(s+1)}{q^z} = \frac{\zeta(s+w+z)^{k(k-1)}}{ \zeta(s+1+z)^{k-1}} H(z;s,w), 
  $$ 
  where (for any $s,w$ in our small discs) $H(z;s,w)$ is analytic in Re$(z)>-1/2+\epsilon$ for any $\epsilon>0$, and bounded in that region.  Using this in \eqref{6.7} and moving the line of integration over $z$ to the left, we can write the quantity we need to compute in \eqref{6.7} (up to an acceptable error $o(N(\log N)^{k^2-1})$) as 
  \begin{align*}
  \mathop{\text{Res}}_{z=0} \mathop{\text{Res}}_{s=0} \mathop{\text{Res}}_{w=1} \zeta(s+1)^{k-1} \zeta(w)^k \frac{\zeta(s+w+z)^{k(k-1)}}{\zeta(s+1+z)^{k-1}}& H(z;s,w)\Big(\int_0^1 \Phi(y)y^{w-1}dy \Big)\\
  &\times \frac{R^s N^w (R^z-(KQ_0)^z)}{sz}. 
  \end{align*}
  Now, in computing the residues above, we may replace terms $\zeta(u)$ for $u$ near $1$ by $1/(u-1)$, and also replace $H(z;s,w)$ by $H(0;0,1)$ and $\int_0^1 \Phi(y)y^{w-1} dy$ by $\int_0^1 \Phi(y)dy$.  These changes affect the residue above only to order $N(\log N)^{k^2-2}$.  Thus, our desired main term is (also replacing $w$ by $w+1$)  
  \begin{equation} 
  \label{6.8}
  NH(0;0,1) \Big(\int_0^1 \Phi(y) dy \Big)\mathop{\text{Res}}_{z=0} \mathop{\text{Res}}_{s,w=0} \frac{1}{s^{k-1} w^k} \frac{(s+z)^{k-1}}{(s+z+w)^{k(k-1)}} \frac{R^s N^w (R^z-(KQ_0)^z)}{sz}.
  \end{equation}
  
 A straightforward calculation shows that 
 $$ 
 H(0;0,1) = \prod_p \Big(1-\frac 1p\Big)^{k^2} \Big(\sum_{a=0}^{\infty} \frac{d_k(p^a)^2}{p^a} \Big), 
 $$ 
 matching the natural Euler factor that arises in the asymptotic for $\sum_{n\le N} d_k(n)^2$.  Further $\int_0^{1} \Phi(y) dy =1+O(\epsilon)$ by our choice of $\Phi$.  
 Finally, another straightforward calculation gives that the residues in \eqref{6.8} equal 
 \begin{align} 
 \label{6.9} 
 \sum_{\ell=0}^{k-1} \sum_{j=0}^{k-1} \binom{-k(k-1)}{\ell} \binom{-(k-1)^2-\ell}{j} &\frac{(\log N)^{k-1-\ell}}{(k-1-\ell)!} \frac{(\log R)^{k-1-j}}{(k-1-j)!} \nonumber \\ 
 &\times \frac{(\log R)^{(k-1)^2+\ell+j} - (\log KQ_0)^{(k-1)^2+\ell+j}}{((k-1)^2+\ell+j)!}.
 \end{align} 
 In performing this calculation, it is helpful to write 
 $$
\frac{ 1}{(s+z+w)^{k(k-1)}} = (s+z)^{-k(k-1)} \Big(1 + \sum_{\ell=1}^{\infty} \binom{-k(k-1)}{\ell} \Big(\frac{w}{s+z}\Big)^{\ell}\Big )
 $$
 to compute first the residue in $w$, 
 and then write
  $$
\frac{1}{  (s+z)^{(k-1)^2 + \ell}} = z^{-(k-1)^2 - \ell}\Big (1 + \sum_{j=1}^{\infty} \binom{-(k-1)^2 - \ell}{j} \Big(\frac{s}{z}\Big)^{j} \Big)
  $$ 
  to compute the residue in $s$, and then compute the residue in $z$ as the final step.  
 
   At this stage, we have successfully evaluated our desired quantity \eqref{6.6}.  However, it is not immediately clear that the quantity in \eqref{6.9}, which is clearly $\ll_{k} (\log N)^{k^2-1}$, cannot somehow cancel out to zero.   In our argument we have so far left $R$ to be an unspecified non-integer value lying between $Q_0N^{\epsilon}$ 
   and $QN^{-\epsilon}$.  We may expect that the expression in \eqref{6.9} is positive and increasing in $R$ in that range, so that the optimal choice for $R$ would be $QN^{-\epsilon}$.   But it does not seem straightforward to establish that claim, assuming it is true!  Instead we may circumvent this difficulty as follows. After scaling by $(\log N)^{k^2-1}$, the expression in \eqref{6.9} is, for fixed $N$, $KQ_0$,  a polynomial in $\alpha =\log R/\log N$ of degree $k^2-1$.  The leading coefficient of this polynomial can be readily calculated: it equals the $\ell=k-1$ term, namely
 \begin{align*}
 \binom{-k(k-1)}{k-1} &\sum_{j=0}^{k-1} \binom{-k(k-1)}{j} \frac{1}{(k-1-j)!} \frac{1}{(k(k-1)+j)!} \\
&= (-1)^{k-1} \frac{1}{(k-1)! (k(k-1)-1)! (k^2-1)}.
\end{align*}  
 Recall that over a given interval, any polynomial of a given degree and 
 leading coefficient attains in size a value that may be bounded below just in terms of the degree, the leading coefficient, and the length 
 of the interval.  Indeed, scaled and translated versions of the Chebyshev polynomials minimize this maximal size.  Since $\alpha$ is allowed to vary in an interval 
 of length $(\log (Q^2/N)/\log N-\epsilon) \gg_{\delta} 1$, we conclude that for some $R$ in $[Q_0N^{\epsilon},QN^{-\epsilon}]$, our quantity \eqref{6.8} has size $\ge C (\log N)^{k^2-1}$ for some 
 constant $C$ depending only on $k$ and $\delta \le  \log (Q/\sqrt{N})/\log N$.   This completes our proof.

\subsection{Proof of Theorem 2: the second ending}
To finish the paper we offer a different ending to the proof of Theorem \ref{kdiv}, by proving \eqref{absvalueseasier}.  
Recall that $\delta>0$ is suitably small, $Q\ge N^{\frac 12+\delta}$ and that $Q_0 = N^{1+\epsilon}/Q \le N^{\frac 12-\delta+\epsilon}$.  We take $R=N^{\frac 12-\frac{\delta}{2}}$, and appeal to Proposition \ref{newprop}.  We conclude that  
$$ 
\int_{\frak m} |D_k(N;\alpha) {\tilde D}_k(N;\alpha)| d\alpha 
\ge \sum_{KQ_0 < q\le R}  \sum_{\substack{r\le R \\ q|r}} \frac{d_{k-1}(r)}{r} 
\Big| \sum_{n\le N} d_k(n) c_q(n) \Phi\Big(\frac{n}{N}\Big) \Big| + O(N^{1-\epsilon}). 
$$ 
Now for any $q\le R$, note that 
\begin{equation*}
\sum_{\substack{r \leq R \\ q|r}} \frac{d_{k-1}(r)}{r}
 \geq  \frac{d_{k-1}(q)}{q} \sum_{\substack{m \leq R/q \\ (m,q)=1}} \frac{d_{k-1}(m)}{m} 
 \geq  \frac{d_{k-1}(q)}{q} \prod_{p|q} \left(1- \frac{1}{p} \right)^{k-1} \sum_{m \leq R/q} \frac{d_{k-1}(m)}{m} . 
\end{equation*}
A standard calculation shows that the above is 
$$ 
\gg_{k} \frac{d_{k-1}(q)}{q} \Big(\frac{\phi(q)}{q} \Big)^{k-1} (\log (R/q))^{k-1}, 
$$ 
and therefore 
\begin{equation} 
\label{altpr1}
\int_{\frak m} |D_k(N;\alpha) {\tilde D}_k(N;\alpha)| d\alpha  \gg \sum_{KQ_0 < q\le R} 
\frac{d_{k-1}(q)}{q}\Big(\frac{\phi(q)}{q} \log \frac{R}{q}\Big)^{k-1} \Big| 
\sum_{n\le N} d_k(n) c_q(n) \Phi\Big(\frac{n}{N}\Big) \Big|. 
\end{equation}

In Proposition \ref{Prop3} we saw how to evaluate the sum over $n$ in \eqref{altpr1} as a residue, but 
that residue calculation can be complicated, as we saw in the previous section.  Now we show that for certain 
values of $q$, one may obtain a lower bound for this residue and this will be enough to deduce our desired lower 
bound \eqref{absvalueseasier}.

\begin{lemma}\label{ramdkeval}
For any natural number $k \geq 2$, and any small $\delta > 0$, there exists a small constant $c_{k,\delta} > 0$ such that the following is true. If $N$ is large enough depending on $k$ and $\delta$, and if $q \leq N^{\frac 12 -\frac{ \delta}2}$ is squarefree and composed only of primes below $N^{c_{k,\delta}}$, then
$$
 \Big| \sum_{n \leq N} d_k(n) \Phi(n/N) c_q(n) \Big| 
 \gg_{k, \delta} d_{k-1}(q) \Big( \frac{\phi(q)}{q}\Big)^{k} N  (\log N)^{k-1}. 
 $$
\end{lemma}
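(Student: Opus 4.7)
The plan is to compute the sum by a smoothed Perron formula and then analyse the residue at $s = 1$. Let $\tilde\Phi(s) := \int_0^\infty \Phi(y) y^{s-1}\, dy$; by smoothness of $\Phi$, $\tilde\Phi(s)$ decays faster than any polynomial on vertical lines. Using the Dirichlet series identity $\sum_n d_k(n) c_q(n)/n^s = \zeta(s)^k F_q(s)$ from Proposition \ref{Prop3} and a contour shift argument analogous to the proof of that proposition, we obtain
$$
\sum_n d_k(n) \Phi(n/N) c_q(n) = \mathop{\text{Res}}_{s=1}[\zeta(s)^k F_q(s) \tilde\Phi(s) N^s] + O(N^{1-c_1})
$$
for some $c_1 = c_1(k,\delta) > 0$ when $q \leq N^{1/2-\delta/2}$, and this error is negligible compared with the claimed main term.

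It remains to analyse the residue. Let $G(s) := ((s-1)\zeta(s))^k F_q(s) \tilde\Phi(s)$, which is analytic at $s = 1$ with $G(1) = F_q(1)\tilde\Phi(1)$. The residue equals $\frac{N}{(k-1)!}\sum_{j=0}^{k-1}\binom{k-1}{j}(\log N)^{k-1-j} G^{(j)}(1)$. For $q$ squarefree, summing the geometric series in the definition of $F_q$ yields the closed form $F_q(s) = \prod_{p|q}[(p-1) - p(1-p^{-s})^k]$, so $F_q(1) = \prod_{p|q}(p-1)[1 - (1-1/p)^{k-1}]$. Bernoulli's inequality $(1-1/p)^{-(k-1)} \geq 1 + (k-1)/p$ gives $1 - (1-1/p)^{k-1} \geq (k-1)(1-1/p)^{k-1}/p$ prime-by-prime, and multiplying out yields $F_q(1) \geq d_{k-1}(q)(\phi(q)/q)^k$.

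To control the subleading terms write $G(s) = G(1)\exp\bigl(\sum_{j \geq 1} G_j u^j/j!\bigr)$ with $u = s-1$ and $G_j = (\log G)^{(j)}(1)$. Direct calculation from $f_p(s) := (p-1) - p(1-p^{-s})^k$ shows $(\log f_p)'(1) = -\alpha_p(k)\log p$ with $0 < \alpha_p(k) \leq k/(k-1)$ (equality for $k=2$; strict for $k \geq 3$), and $|(\log f_p)^{(j)}(1)| \leq C_{k,j}(\log p)^j$ for $j \geq 2$. Summing over $p|q$ and using both hypotheses $q \leq N^{1/2-\delta/2}$ and $p|q \Rightarrow p \leq N^{c_{k,\delta}}$, we obtain
$$
G_1 = -\sum_{p|q}\alpha_p(k)\log p + O_k(1), \qquad |G_j| \leq C'_{k,j}\, c_{k,\delta}^{\,j-1}(\log N)^j \quad (j \geq 2).
$$
Setting $M := \log N + G_1$, the first relation gives $M \geq \log N - (k/(k-1))\log q - O_k(1) \geq \kappa_{k,\delta}\log N$ for large $N$, where $\kappa_{k,\delta} > 0$ (the restriction $q \leq N^{1/2-\delta/2}$ is tight for $k=2$ since $\alpha_p(2) = 2$ exactly, and allows more margin for $k \geq 3$). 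The residue equals $NG(1)$ times the coefficient of $u^{k-1}$ in $\exp(Mu + \sum_{j \geq 2}G_j u^j/j!)$; the leading piece $M^{k-1}/(k-1)!$ has size $\gg_{k,\delta}(\log N)^{k-1}$, while every other term contains at least one factor $G_j$ with $j \geq 2$ and contributes in total at most $C_k\, c_{k,\delta}^{1/2}(\log N)^{k-1}$. Taking $c_{k,\delta}$ sufficiently small makes the leading piece dominate, yielding a residue of size $\gg_{k,\delta} G(1) N(\log N)^{k-1} \gg_{k,\delta} d_{k-1}(q)(\phi(q)/q)^k N(\log N)^{k-1}$.

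The principal obstacle is this last step: one must ensure that the (possibly oppositely-signed) subleading contributions coming from the derivatives $G_j$, $j \geq 2$, cannot conspire to cancel the main term $M^{k-1}/(k-1)!$. The small-prime restriction $p \leq N^{c_{k,\delta}}$ is introduced precisely to damp these corrections, and together with $q \leq N^{1/2-\delta/2}$ (which controls $G_1$) it is just sufficient for the lower bound.
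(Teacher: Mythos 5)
Your proof is correct and follows essentially the same route as the paper: reduce to the residue at $s=1$ via Proposition~\ref{Prop3}, use the closed form of $F_q(s)$ for squarefree $q$ to get $F_q(1)\ge d_{k-1}(q)(\phi(q)/q)^k$, expand $\log F_q$ around $s=1$, and show the residue is dominated by $(\log N + f_q(1))^{k-1}/(k-1)!$ with the higher logarithmic derivatives controlled by the smoothness hypothesis $p\le N^{c_{k,\delta}}$ and the sign of $\log N + f_q(1)$ controlled by $q\le N^{1/2-\delta/2}$ together with the bound $\alpha_p(k)\le k/(k-1)$. The only cosmetic differences (Mellin transform versus partial summation; absorbing the $\zeta$ and $\tilde\Phi$ contributions into $G_1$ rather than treating $f_q(1)$ directly; the imprecise exponent $c_{k,\delta}^{1/2}$ where $c_{k,\delta}^1$ actually holds) do not change the argument.
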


Assuming the lemma, we can quickly finish our second proof of Theorem \ref{kdiv}.  Restricting attention to $KQ_0 < q\le RN^{-\delta/4}$ 
with $q$ square-free and composed only of primes below $N^{c_{k,\delta}}$, the sum in \eqref{altpr1} is (with the $\star$ on the sum indicating these conditions)
$$ 
\gg_{k,\delta} N(\log N)^{2(k-1)} \sum_{q}^{\star} \frac{d_{k-1}(q)^2} {q} \Big(\frac{\phi(q)}{q}\Big)^{2k-1}.  
$$
Now one can show that
$$ 
\prod_{N^{c_{k,\delta}} \le p\le R} \Big(1-\frac{1}{p}\Big)^{-(k-1)^2} 
\sum_q^{\star} \frac{d_{k-1}(q)^2}{q} \Big(\frac{\phi(q)}{q}\Big)^{2k-1} \gg_{k,\delta} \sum_{KQ_0 < q\le RN^{-\delta/4}}^{\prime} \frac{d_{k-1}(q)^2}{q}  \Big( \frac{\phi(q)}{q}\Big)^{2k-1},  
$$ 
 where the $\prime$ indicates that the smoothness condition on $q$ has been removed, but the square-free condition kept in place. (If the sum had $q \leq R N^{-\delta/4}$, rather than $KQ_0 < q \leq R N^{-\delta/4}$, this would follow trivially as in the manipulations at the beginning of this subsection. To deal with the interval condition, one can compare the numbers $q$ appearing in different intervals of multiplicative length $N^{\delta/5}$ to show that the sum over each interval is of the same order of magnitude.) Then either by elementary arguments, or through a straightforward contour shift argument we may see that the sum above is $\gg_{k,\delta} (\log N)^{(k-1)^2}$.  It follows that our quantity in \eqref{altpr1} is 
 $\gg_{k,\delta} N(\log N)^{k^2- 1}$, which establishes \eqref{absvalueseasier}.  
 
\begin{proof}[Proof of Lemma \ref{ramdkeval}] We use our work from Proposition \ref{Prop3}.  Our goal will be to show that for $q$ as in the lemma, one has 
\begin{equation} 
\label{altpr2}
\mathop{\text{Res}}_{s=1} \left(\zeta(s)^{k} \frac{F_{q}(s)}{F_{q}(1)} \frac{N^{s-1}}{s} \right) \gg_{k,\delta} (\log N)^{k-1}. 
\end{equation} 
Observe that for square-free $q$, the definition of $F_q(s)$ may be simplified:  
$$ 
F_q(s) = \prod_{p|q} \Big(1-\frac{1}{p^s}\Big)^k \Big( -1 + \phi(p) \sum_{b=1}^{\infty}\frac{d_k(p^b)}{p^{bs}} \Big) 
= \prod_{p|q} \Big( p-1 - p \Big(1-\frac{1}{p^s}\Big)^k \Big). 
$$ 
Thus, in particular, 
$$ 
F_q(1) = \prod_{p|q}  \Big(1-\frac 1p\Big)^{k} p \Big( \Big(1-\frac{1}{p}\Big)^{-(k-1)} - 1\Big) = \Big(\frac{\phi(q)}{q}\Big)^{k} \prod_{p|q} p \Big( \Big(1-\frac{1}{p}\Big)^{-(k-1)} - 1\Big)  \ge d_{k-1}(q) \Big(\frac{\phi(q)}{q}\Big)^{k}, 
$$
 and so the lemma will follow from \eqref{altpr2} and partial summation.  
 

To estimate the residue in \eqref{altpr2} it is helpful to let $f_q(s)$ denote the logarithmic derivative $F_{q}'(s)/F_{q}(s)$, 
so that by Taylor's theorem we have (in a neighbourhood of $s=1$)
$$ 
\frac{F_q(s)}{F_q(1)} = \exp\{\log F_q(s) - \log F_q(1)\} = \exp\Big((s-1) f_{q}(1) + \frac{(s-1)^2}{2!} f_{q}'(1) + \frac{(s-1)^3}{3!} f_{q}''(1) + ... \Big) .
 $$
 A quick calculation gives 
 \begin{equation} 
 \label{altpr3} 
 f_q(s)= - \sum_{p|q} \frac{k\log p}{p^{s-1}} \Big\{ (p-1) \Big(1-\frac{1}{p^s}\Big)^{-(k-1)} - p\Big(1-\frac 1{p^s}\Big)\Big\}^{-1} , 
 \end{equation} 
and in particular 
\begin{equation} 
\label{altpr4} 
0\ge f_q(1) = - \sum_{p|q } \frac{k\log p}{p-1} \Big\{ \Big(1-\frac 1p \Big)^{-(k-1)} -1\Big\}^{-1} \ge - \frac{k}{k-1} \sum_{p|q} \frac{p}{p-1} \log p. 
\end{equation}  
Further repeated differentiation shows that for any non-negative integer $0\le m\le k$ one has, for a suitable constant $C(k)$   
\begin{equation} 
\label{altpr5} 
|f_q^{(m)}(1)| \le C(k) \sum_{p|q} (\log p)^{m+1} \le C(k) (\log q) \Big( \max_{p|q} (\log p)^m \Big). 
\end{equation}  

With these calculations in place, we return to the residue in \eqref{altpr2}, which is the coefficient of $1/(s-1)$ in the Laurent expansion around $s=1$ 
of 
$$ 
\frac{1}{(s-1)^k} \frac{((s-1)\zeta(s))^k}{s} \exp\Big( (\log N +f_q(1)) (s-1) + \frac{(s-1)^2}{2!} f_1^{\prime}(1) + \frac{(s-1)^3}{3!} f_q^{\prime\prime}(1)+ \ldots \Big),  
$$ 
which equals 
$$ 
\sum_{j=0}^{k-1} \frac{(\log N+f_q(1))^{k-1-j}}{(k-1-j)!} \mathop{\text {Res}}_{s=1} \Big( \frac{((s-1) \zeta(s))^k}{(s-1)^{j+1} s} \exp\Big( \frac{(s-1)^2}{2!} f_q^{\prime}(1) +\ldots \Big) \Big). 
$$ 
Using \eqref{altpr5} we may see that the terms $j\ge 1$ above contribute an amount that is
$$ 
\le C(k) (\log N)^{k-2} \Big( \max_{p|q} \log p\Big), 
$$
for a suitable (different) constant $C(k)$.  Therefore the residue we seek is 
$$ = \frac{(\log N+f_q(1))^{k-1}}{(k-1)!} + O_{k}\left( (\log N)^{k-2} \Big( \max_{p|q} \log p\Big) \right) = \frac{(\log N+f_q(1))^{k-1}}{(k-1)!} + O_{k}\left( c_{k,\delta} (\log N)^{k-1} \right) , $$
where $c_{k,\delta}$ is as in the statement of the lemma. In particular, using \eqref{altpr4} we obtain \eqref{altpr2} provided $q\le N^{\frac 12-\frac{\delta}{2}}$ and provided $c_{k,\delta}$ is small enough.  
\end{proof}

 \bibliographystyle{plain} 
 \bibliography{RefsUni}{}

  \end{document}